\newcommand{\N}{{\mathbb N}}
\newcommand{\R}{{\mathbb R}}
\def\P{\mathbb P}
\newcommand{\E}{{\mathbb E}}
\newcommand{\what}{\widehat}
\newcommand{\cov}{{\rm {Cov}}}
\numberwithin{equation}{section}
\numberwithin{theorem}{section}
\numberwithin{lemma}{section}
\numberwithin{proposition}{section}
\numberwithin{corollary}{section}
\numberwithin{example}{section}
\numberwithin{remark}{section}
\numberwithin{definition}{section}
\begin{document}

\title{On the rate of concentration of maxima in Gaussian arrays}



\author{Rafail Kartsioukas
\and
        Zheng Gao
\and 	
	    Stilian Stoev 
}


\institute{R. Kartsioukas \at
			 University of Michigan, Department of Statistics, Ann Arbor, Michigan, USA\\ 
			 \email{rkarts@umich.edu} 
           \and
           Z. Gao \at
              University of Michigan, Department of Statistics, Ann Arbor, Michigan, USA\\ 
			 \email{gaozheng@umich.edu} 
           \and
           S. Stoev \at
              University of Michigan, Department of Statistics, Ann Arbor, Michigan, USA\\ 
			 \email{sstoev@umich.edu} 
}

\date{Received: date / Accepted: date}

\maketitle
\begin{abstract}
Recently in \cite{gao2018fundamental} it was established that the concentration of maxima phenomenon
is the key to solving the exact sparse support recovery problem in high dimensions.   This phenomenon,
known also as relative stability, has been little studied in the context of dependence.  Here, we 
obtain bounds on the rate of concentration of maxima in Gaussian triangular arrays.  These results are used to establish sufficient conditions for the uniform relative stability of functions of Gaussian arrays, leading to new models that exhibit phase transitions in the exact support recovery problem.
Finally, the optimal rate of concentration for Gaussian arrays is studied under more general assumptions than the ones implied by the classic condition of \cite{berman1964limit}.

\keywords{rate of relative stability \and concentration of maxima \and exact support recovery \and phase transitions \and functions of Gaussian arrays}
\subclass{MSC 62G32\and 62G20 \and 62G10 \and 60G15 \and 60G70}
\end{abstract}

\section{Introduction} Let $Z_i,\ i=1,2,\dots$ be independent and identically distributed (iid) standard Normal random variables. It is well known
that their maxima under affine normalization converge to the Gumbel extreme value distribution.  If, however, one chooses to standardize the maxima 
by only dividing by a sequence of positive numbers, then the only possible limits are constants.  Specifically, for all $a_p\sim \sqrt{2\log(p)}$, we have
\begin{equation}\label{e:rel-stab}
\frac{1}{a_p} \max_{i\in [p]} Z_i \stackrel{{\mathbb P}}{\longrightarrow} 1,\quad \mbox{as }p\to\infty,
\end{equation}
where $[p] :=\{1,\cdots,p\}$ and in fact the convergence is valid almost surely. 
This property, known as {\em relative stability}, dates back to the seminal 
work of \citet{gnedenko1943distribution} who has characterized it in terms of rapid variation of the law of the $Z_i$'s
(see Section \ref{sec:gao-review} below, as well as \cite{barndorff1963limit,resnick1973almost,kinoshita1991convergence}).

In contrast, if the $Z_i$'s are iid and heavy-tailed, i.e., ${\mathbb P}[Z_i>x] \propto x^{-\alpha}$, for some $\alpha>0$, with $a_p \propto p^{1/\alpha}$, 
we have
\begin{equation}\label{e:disp-max}
\frac{1}{a_p} \max_{i\in [p]} Z_i \stackrel{d}{\longrightarrow} \xi, 
\end{equation}
where $\xi$ is a random variable with the $\alpha$-Fr\'echet distribution.

Comparing \eqref{e:rel-stab} and \eqref{e:disp-max}, we see that the maxima have fundamentally different asymptotic 
behavior relative to rescaling with constant sequences.  In the light-tailed regime, they {\em concentrate} around a constant 
in the sense of \eqref{e:rel-stab}, whereas in the heavy-tailed regime they {\em disperse} according to a probability distribution viz 
\eqref{e:disp-max}.

Although this concentration of maxima phenomenon may be well-known under independence, we found that it is virtually 
unexplored under {\em dependence}.  In this paper, we will focus on Gaussian sequences, and in fact, more generally,
Gaussian triangular arrays ${\cal E} = \{\epsilon_p(i),\ i\in [p],\ p\in\N\}$, where the $\epsilon_p(i)$'s are marginally standard 
Normal but possibly  dependent.  Let $u_p$ be the $(1-1/p)$-th quantile of the standard Normal distribution, i.e., 
$p\overline \Phi(u_p):=p\left(1-\Phi(u_p)\right)=1$. We say that the array ${\cal E}$ is uniformly relatively stable (URS), if
\begin{equation}\label{eq: URS S_p}
\frac{1}{u_{|S_p|}} \max_{i\in S_p} \epsilon_p(i) \stackrel{{\mathbb P}}{\longrightarrow }1,\quad \mbox{as } |S_p|\to\infty,
\end{equation}
for every choice of growing subsets $S_p\subset \{1,\cdots,p\}$. Note that $u_p\sim\sqrt{2\log(p)}$ (see \textit{e.g.} Lemma \ref{l:ap-bp-constants}).  Certainly, the relative stability property shows that
all iid Gaussian arrays are trivially URS.  The notion of uniform relative stability, however, is far from automatic or trivial under dependence. In the recent work of \cite{gao2018fundamental}, it was found that URS is the key to establishing the fundamental limits in sparse-signal support estimation in high-dimensions.  Specifically, under URS, a phase-transition phenomenon was shown to take place in the support recovery problem.
For more details, see Section \ref{sec:gao-phase-transition} below.

Theorem 3.1 in \cite{gao2018fundamental} gives a surprisingly simple necessary and sufficient condition for a 
Gaussian array ${\cal E}$ to be URS. As an illustration, in the special case where $\epsilon_p(i) \equiv Z_i,\ i\in \N$
form a stationary Gaussian time series, the array ${\cal E}$ is URS if and only if the auto-covariance vanishes, i.e.,
\begin{equation}\label{eq: cov to 0}
{\rm Cov}(Z_k,Z_0)\longrightarrow 0,\quad \mbox{as }k\to\infty.    
\end{equation}

That is, \eqref{e:rel-stab} holds (with $a_p\sim \sqrt{2\log(p)}$), for {\em any} stationary Gaussian time series
$Z = \{ Z_i\}$ with vanishing auto-covariance, no matter the rate of decay. The ``if'' part of \eqref{eq: cov to 0} appeared in Theorem 4.1 in \cite{berman1964limit}. 

Condition \eqref{eq: cov to 0} should be contrasted with the classic
Berman condition, $${\rm Cov}(Z_k,Z_0) = o\left(\frac{1}{\log(k)}\right),\quad \mbox{as } k\to\infty,$$ which entails {\em distributional} convergence under affine 
normalization.  Here, our focus is not on distributional limits but on merely the concentration of maxima under rescaling,
which can take place under much more severe dependence.  In fact, unlike Berman, here we are not limited to the time-series setting.  For
a complete statement of the characterization of URS, see Section \ref{sec:gao-review}, below.

While \cite{gao2018fundamental} characterized the conditions under which the convergence \eqref{eq: URS S_p} takes place, the rate of this convergence remained an open question. In this paper, our goal is to establish bounds on the {\em rate of concentration} for maxima of Gaussian arrays.  
Specifically, we establish results of the type
\begin{equation}\label{e:rate-delta_p}
{\mathbb P} \left [ \left |\frac{1}{u_p} \max_{i\in [p]} \epsilon_p(i) - 1\right | > \delta_p \right] \longrightarrow 0,
\end{equation}
where $\delta_p \to 0$ decays at a certain rate.  The rate of the sequence $\delta_p$ is quantified explicitly
in terms of the covariance structure of the array.  More precisely, the packing numbers $N(\tau)$ associated with the
UDD condition introduced in \cite{gao2018fundamental} will play a key role.  These packing numbers arise from a 
Sudakov-Fernique type construction, which appear to be close to optimal, although at this point we do not know if 
the so obtained bounds on the rates can be improved (cf Conjecture \ref{con: optimal rate}, below).  After completing this work,
we became aware of the important results of \cite{tanguy2015some},
which are closely related to ours in the special case of stationary
time series. Our approach, however, is technically different and
yields explicit rates for the general case of Gaussian triangular 
arrays. For more details, see Remark \ref{re: Tanguy}, below.

\medskip
Our general results are illustrated with several models, where explicit bounds on the rates of concentration are derived.
In Section \ref{sec:optimal}, we study the {\em optimal rate} of concentration and show that under rather broad dependence conditions
(including the iid setting), \eqref{e:rate-delta_p} holds if and only if $\delta_p \gg 1/\log(p)$.  Somewhat curiously, the
constant $u_p$ matters and the popular choice of $u_p:=\sqrt{2\log(p)}$ leads to the slower rates of 
$\log(\log(p))/\log(p)$.

Our bounds on the rate of concentration find important application in the study of uniform relative stability for functions of Gaussian arrays. Specifically, let $\eta_p(i)=f(\epsilon_p(i)),$ where $\mathcal{E}=\left\{\epsilon_p(i),\ i\in[p],\ p\in\mathbb{N}\right\}$ is a Gaussian triangular array and $f$ is a given deterministic function. In Section \ref{sec: functions Gaussian}, using our results on the rate of concentration for the array $\mathcal{E}$, we establish conditions which imply the uniform relative stability of the array $\mathcal{H}=\left\{\eta_p(i),\ i\in[p],\ p\in\mathbb{N}\right\}.$ Consequently, we establish that many dependent log-normal and $\chi^2$-arrays are URS, and hence obey the phase-transition result of \cite{gao2018fundamental}. 

\medskip
{\em The paper is structured as follows.} In Section \ref{sec:review}, we review the statistical inference problem motivating
the study of the concentration of maxima phenomenon.  Recalled is the notion of uniform decreasing
dependence involved in the characterization of uniform relative stability for Gaussian arrays.  A brief discussion on the optimal rate of concentration is given in Section \ref{sec:optimal}. Section \ref{sec:main} contains
the statement of the main result as well as some examples and applications. Section \ref{sec:proofs} contains proofs and  technical results, which
may be of independent interest.
\section{Concentration of maxima and high-dimensional inference} \label{sec:review}

In this section, we start with the statistical inference problem that motivated us to study the concentration of maxima 
phenomenon.  Readers who are convinced that this is a phenomenon of independent interest can skip to Section 
\ref{sec:gao-review}, where concrete definitions and notions are reviewed.

\subsection{Fundamental limits of support recovery in high dimensions}\label{sec:gao-phase-transition}

Our main motivation to study the relative stability or {\em concentration of maxima} under dependence is the fundamental role it plays in recent developments on high-dimensional statistical inference, which we briefly review next.  
Consider the classic {\em signal plus noise} model
$$
x_p(i) = \mu_p(i) + \epsilon_p(i),\quad i\in [p],
$$
where $\mu_p = (\mu_p(i)) \in \R^p$ is an unknown high-dimensional `signal' observed with additive noise.  The noise is modeled with a triangular array
${\cal E} = \{\epsilon_p(i),\ i\in[p],\ p\in\N\}$, where for concreteness, all $\epsilon_p(i)$'s are standardized to have the same marginal distribution 
$F$. However, this noise can have arbitrary dependence structure, in principle.  

One popular and important high-dimensional inference context, is the one where the dimension $p$ grows to infinity and the signal is sparse.  
Namely, the signal support set $S_p := \{ i\in [p]\, :\, \mu_p(i) \not=0\}$
is of smaller order than its dimension:
$$
|S_p| \sim p^{1-\beta},\mbox{ for some } \beta \in (0,1).
$$
The parameter $\beta$ controls the degree of sparsity; if $\beta$ is larger, the signal is more sparse, i.e., has fewer non-zero components.  In this context, many 
natural questions arise such as the detection of the presence of non-zero signal or the estimation of its support set (see, e.g., \cite{ingster1998minimax, donoho2004higher, ji2012ups,arias2017distribution}). Here, as in \cite{gao2018fundamental}, we focus on the fundamental {\em support recovery} problem. Particularly, under what conditions on 
the signal magnitude we can have {\em exact support recovery} in the sense that
$$
 {\mathbb P}[\what S_p = S_p] \longrightarrow 1,\quad \mbox{as }p\to\infty.
$$
\cite{gao2018fundamental} showed that a natural solution to this problem can be obtained using the
concentration of maxima phenomenon.  Specifically, consider the class of all thresholding support estimators:
\begin{equation}\label{def: S_p}
\widehat S_p:= \{ j\in [p]\, :\, x_p(j) > t_p(x)\},    
\end{equation}
where $t_p(x)$ is possibly data-dependent threshold. For simplicity of exposition, suppose also that the signal magnitude is parametrized as follows \begin{equation*}
\mu_p(i)=\sqrt{2r\log(p)},\quad i\in S_p,   
\end{equation*} where $r>0.$ Consider also the function $$g(\beta):=(1+\sqrt{1-\beta})^2.$$ Theorems 2.1 and 2.2 of \cite{gao2018fundamental} entail that if $\mathcal{E}$ is URS (see Definition \ref{def: URS} below), then we have the phase-transition:
\begin{equation*}
{\mathbb P}[\what S_p = S_p] \longrightarrow  \left\{ \begin{array}{ll}
 1, & \mbox{if $r> g(\beta)$ for suitable } \what S_p\ \mbox{as in \eqref{def: S_p}}\\
 0, & \mbox{if $r< g(\beta)$ for all }\what S_p\ \mbox{as in \eqref{def: S_p}}
 \end{array}\right.,\quad \mbox{as } p\to\infty.    
\end{equation*}

That is, for signal magnitudes above the boundary, thresholding (Bonferonni-type) estimators recover the support perfectly, as $p\to\infty$;
whereas for signals below the boundary, no thresholding estimators can recover the support with positive probability.   Further, as shown in
\cite{gao2018fundamental}, thresholding estimators are optimal in the iid Gaussian setting and hence the above 
phase-transition applies to {\em all possible} support estimators leading to minimax-type results.  Interestingly, 
both Gaussian and non-Gaussian noise arrays are addressed equally well, provided that 
they satisfy the uniform relative stability property.
While URS is a very mild condition, except for the Gaussian case addressed in \cite{gao2018fundamental}, little is known in general. Here, we will fill this gap for a class of functions of Gaussian arrays (see Section \ref{sec: functions Gaussian}), using our new results on the rates of concentration. 

\subsection{Concentration of maxima} \label{sec:gao-review}

In this section, we recall some definitions and a characterization of URS in
\cite{gao2018fundamental}. We start by presenting the notion of \textit{relative stability}.  

\begin{definition}\label{def: RS}
(Relative stability). Let $\epsilon_p=(\epsilon_p(j))_{j=1}^p$ be a sequence of random variables with identical marginal distributions $F$. Define the sequence $(u_p)_{p=1}^{\infty}$ to be the $(1-1/p)$-th quantile of $F$, i.e., 
\begin{equation}\label{def: up}
u_p=F^{\leftarrow}(1-1/p).    
\end{equation}
The triangular array $\mathcal{E}=\{\epsilon_p,\ p\in\mathbb{N}\}$ is said to have relatively stable (RS) maxima if  \begin{equation}\label{def: RS limit}
\frac{1}{u_p}M_p:=\frac{1}{u_p}\max_{i=1,\hdots,p}\epsilon_p(i)\stackrel{{\mathbb P}}{\to}1,    
\end{equation}
as $p\to\infty.$
\end{definition}
Note that by Proposition 1.1 of \cite{gao2018fundamental}, we have for the standard Normal distribution, that \begin{equation}\label{eq: quantiles}
u_p = \Phi^{\leftarrow}(1-1/p)\sim \sqrt{2\log(p)}.   
\end{equation} 
While relative stability is not directly used in this paper, it is a natural prerequisite to introducing the following generalization. 
\begin{definition}\label{def: URS}
(Uniform Relative Stability (URS)). Under the notations established in Definition \ref{def: RS}, the triangular array $\mathcal{E}=\left\{\epsilon_p(i),\ i\in[p]\right\}$ is said to have uniform relatively stable (URS) maxima if for $\textit{every}$ sequence of subsets $S_p\subseteq\{1,\hdots,p\}$ such that $|S_p|\to\infty,$ we have \begin{equation}\label{def: URS lim}
\frac{1}{u_{|S_p|}}M_{S_p}:= \frac{1}{u_{|S_p|}}\max_{i\in S_p}\epsilon_p(i)\stackrel{{\mathbb P}}{\longrightarrow}1, \quad \mbox{as } p\to\infty.   
\end{equation}
\end{definition}

\begin{definition}\label{def: UDD}
(Uniformly Decreasing Dependence (UDD)).A Gaussian triangular array $\mathcal{E}$ with standard normal marginals is said to be uniformly decreasingly dependent (UDD) if for every $\tau>0$ there exists a finite $N_{\mathcal{E}}(\tau)<\infty,$ such that for every $i\in\{1,\hdots,p\},$ and $p\in\mathbb{N}$, we have \begin{equation}\label{def: Ntau}
\Bigl\lvert \{k\in\{1,\hdots,p\}:\cov(\epsilon_p(k),\epsilon_p(i))>\tau\}\Bigr\rvert\leq N_{\mathcal{E}}(\tau),\quad \text{for all}\ \tau>0.    
\end{equation} 
That is, for any coordinate $j$, the number of coordinates which are more than $\tau$-correlated with $\epsilon_p(j)$ does not exceed $N_{\mathcal{E}}(\tau).$
\end{definition}
The next result provides the equivalence between uniform relative stability and uniformly decreasing dependence.

\begin{theorem}[Theorem 3.2 in \cite{gao2018fundamental}]\label{thm: UDD equiv URS}
Let $\mathcal{E}$ be a Gaussian triangular array with standard Normal marginals. The array $\mathcal{E}$ 
is URS if and only if it is UDD.
\end{theorem}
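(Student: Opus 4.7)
The plan is to prove the two directions separately, with a common toolkit: Slepian's inequality, classical bounds on the maximum of Gaussians, and Gaussian Lipschitz (Borell--TIS) concentration to upgrade expectation bounds to in-probability statements.

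For UDD$\Rightarrow$URS, fix a sequence $S_p$ with $|S_p|\to\infty$ and a $\delta>0$. The upper bound $\P(M_{S_p}>(1+\delta)u_{|S_p|})\to 0$ follows from a plain union bound on marginal Normal tails, using $u_p\sim\sqrt{2\log p}$ and the Mills ratio, and requires no dependence hypothesis. For the lower bound, I would first use UDD to extract, by a greedy selection on the ``$\tau$-correlation graph'' $\{\{i,j\}\subseteq S_p:\cov(\epsilon_p(i),\epsilon_p(j))>\tau\}$ (whose maximum degree is $\le N_\mathcal{E}(\tau)$), an independent set $T_p\subseteq S_p$ with $|T_p|\ge |S_p|/(N_\mathcal{E}(\tau)+1)$ on which every pairwise covariance is at most $\tau$. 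I would then compare $(\epsilon_p(i))_{i\in T_p}$ with the auxiliary vector $\tilde Y_i=\sqrt{\tau}\,W+\sqrt{1-\tau}\,Z_i$ (with $W,Z_i$ iid standard Normal), whose pairwise covariances equal $\tau$. Slepian's inequality then yields $\P(M_{S_p}>t)\ge\P(\max_{i\in T_p}\epsilon_p(i)>t)\ge\P(\max_i\tilde Y_i>t)$. Since $\max_i\tilde Y_i=\sqrt{\tau}\,W+\sqrt{1-\tau}\,\max_i Z_i$, the classical relative stability of iid standard Normals gives $\max_i\tilde Y_i/u_{|T_p|}\stackrel{\P}{\to}\sqrt{1-\tau}$, while $u_{|T_p|}/u_{|S_p|}\to 1$ because $N_\mathcal{E}(\tau)$ is fixed. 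Letting $\tau\downarrow 0$ (via a diagonal argument in $\delta$) concludes the lower bound.

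For URS$\Rightarrow$UDD I would argue by contrapositive. If UDD fails at some $\tau>0$, there exists a sequence $(p_N,i_N)$ such that at least $N$ coordinates have covariance $>\tau$ with $\epsilon_{p_N}(i_N)$; let $S_{p_N}\ni i_N$ consist of $N+1$ such coordinates. The key identity is the pointwise decomposition
\[
\epsilon_{p_N}(k)=\tau\,\epsilon_{p_N}(i_N)+\tilde Y_k,\qquad \tilde Y_k:=\epsilon_{p_N}(k)-\tau\,\epsilon_{p_N}(i_N),
\]
which gives $\var(\tilde Y_k)=1-2\tau\cov(\epsilon_{p_N}(k),\epsilon_{p_N}(i_N))+\tau^2<1-\tau^2$ for $k\neq i_N$. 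Pulling the common term out of the max,
\[
\max_{k\in S_{p_N}\setminus\{i_N\}}\epsilon_{p_N}(k)=\tau\,\epsilon_{p_N}(i_N)+\max_{k\neq i_N}\tilde Y_k,
\]
and the standard entropy bound for the maximum of $N$ centered Gaussians with marginal variance $\le 1-\tau^2$ yields $\E[\max_{k\neq i_N}\tilde Y_k]\le\sqrt{2(1-\tau^2)\log N}$. Borell--TIS concentration keeps $M_{S_{p_N}}$ within $O_{\mathbb{P}}(1)$ of its mean, and since $u_{|S_{p_N}|}\sim\sqrt{2\log N}$ we conclude $\limsup_N M_{S_{p_N}}/u_{|S_{p_N}|}\le\sqrt{1-\tau^2}<1$ in probability, contradicting URS.

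The principal obstacle lies in the UDD$\Rightarrow$URS lower bound: Slepian's inequality demands \emph{pairwise} covariance control, which forces the greedy argument on the correlation graph to produce an honest independent set rather than merely a star around a single high-degree vertex. A secondary, pervasive, point is that URS is phrased in probability, so the $L^1$-style bounds on Gaussian maxima must be upgraded via Lipschitz concentration; this is painless because the $O_{\mathbb{P}}(1)$ fluctuations of $\max$ are dwarfed by $u_{|S_p|}\to\infty$.
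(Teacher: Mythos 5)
Your proposal is correct. Note that the paper itself does not prove this statement --- it imports it as Theorem 3.2 of \cite{gao2018fundamental} --- but the machinery it reproduces in Section \ref{sec:proofs} (the greedy $\gamma$-packing of Proposition \ref{prop: expectationlowerbound} followed by a Sudakov--Fernique comparison with a rescaled iid array, then Lemma \ref{lem: B.1} and a Markov-type argument to pass from expectations to probabilities) is essentially the route taken for the sufficiency direction. Your argument differs in one substantive way: for the lower bound you invoke Slepian's inequality against the exchangeable comparison vector $\sqrt{\tau}\,W+\sqrt{1-\tau}\,Z_i$, which gives a \emph{distributional} domination $\P(\max_{T_p}\epsilon_p(i)>t)\ge\P(\max_i\tilde Y_i>t)$ directly; this buys you the in-probability statement without the detour through $\E\max$ and Markov's inequality, at the cost of only yielding the qualitative URS statement rather than the quantitative rate bounds that the expectation route supports (and which are the point of Theorem \ref{thm: capstone}). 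Your independent-set extraction is the same greedy construction as the paper's packing argument, and your bound $|T_p|\ge|S_p|/(N_\mathcal{E}(\tau)+1)$ is marginally weaker than the paper's $|S_p|/N_\mathcal{E}(\tau)$ but equally sufficient. For the necessity direction, your decomposition $\epsilon_{p_N}(k)=\tau\,\epsilon_{p_N}(i_N)+\tilde Y_k$ with $\var(\tilde Y_k)<1-\tau^2$, combined with the entropy bound and Borell--TIS, is a clean and valid contradiction of URS. The only loose end there is cosmetic: Definition \ref{def: URS} quantifies over sequences of subsets indexed by \emph{all} $p$, whereas your construction produces $S_{p_N}$ only along a subsequence; you should extend to a full sequence (e.g., $S_p=[p]$ for $p\notin\{p_N\}$) and observe that failure of convergence in probability along the subsequence already contradicts convergence along the full sequence.
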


Theorem \ref{thm: UDD equiv URS} is the starting point of the rate investigations in our paper.  Our main result,
Theorem \ref{thm: capstone}, below, extends the former by providing upper bounds on the rate of concentration. Before that, though, in Section \ref{sec:optimal} we study cases where the optimal rate can be formally established.

\begin{remark}[{\it On our use of the term ``upper bound''}]
Fix a positive sequence $\delta_p^{\star}\downarrow0.$ We refer to $\delta_p^{\star}$ as an upper bound on the rate of concentration when \eqref{e:rate-delta_p} holds for any sequence $\delta_p\gg \delta_p^{\star}$. Further, for two positive sequences $\alpha_p$ and $\beta_p$ we write $\alpha_p\asymp\beta_p$ if $$0<c_1\leq\liminf_{p\to\infty}\left|\frac{\alpha_p}{\beta_p}\right|\leq\limsup_{p\to\infty}\left|\frac{\alpha_p}{\beta_p}\right|\leq c_2<\infty.$$

Let $\delta_p^{\star}$ be an upper bound on the rate of concentration and $\delta_p\gg\delta_p^{\star}.$ Then, naturally, \eqref{e:rate-delta_p} holds with $\delta_p$ replaced by $\tilde\delta_p,$ for any $\tilde\delta_p\asymp\delta_p.$
\end{remark}
\medskip{}

\section{On the optimal rate of concentration} \label{sec:optimal}

In this section, we provide some general comments on the fastest possible rates of concentration for 
maxima of Gaussian variables.  Somewhat surprisingly, the rate depends on the choice of the normalizing
sequence $u_p$.  As it turns out poor choices of normalizing sequences can lead to arbitrarily slow rates. On the other hand, 
for a wide range of dependence structures (including the iid case), the best possible rate will be shown to be $1/\log(p)$. 
The question of whether the maxima of dependent Gaussian arrays can concentrate faster than that rate, however unlikely 
this may be, is open, to the best of our knowledge (cf Conjecture
\ref{con: optimal rate}, below).

Consider a Gaussian array ${\cal E}=\{\epsilon_{p}(i),\ i\in [p]\}$ with standard Normal marginal.  We shall assume that 
${\cal E}$ is (uniformly) relatively stable, so that in particular,
$$
\frac{1}{u_p} \max_{i\in [p]} \epsilon_p(i) =: \frac{M_p}{u_p} \stackrel{P}{\longrightarrow} 1,
$$
as $p\to\infty$, where $u_p := \Phi^{-1}(1-1/p)$ is the $(1/p)$-th tail quantile of the standard Normal distribution.

We consider the iid case first and, for clarity, let $M_p^*$ denote the maximum of $p$ independent standard Normal 
random variables.  Suppose that for some $a_p>0$ and $a_p,\ b_p\in \mathbb R$, we have
$$
\Phi(a_p^{-1} x + b_p)^p \to \Lambda(x):= \exp\{-e^{-x}\},\quad\mbox{as }p\to\infty,
$$
for all $x\in\mathbb R$.  That is, we have
\begin{equation}\label{e:iid-max-to-Gumbel}
a_p( M_p^{*} - b_p) \stackrel{d}{\longrightarrow} \zeta,\ \ \mbox{ as }p\to\infty,
\end{equation}
where $\zeta$ has the standard Gumbel distribution $\Lambda$.  The next result is well-known.  We give it here since it 
summarizes and clarifies the possible choices of the normalizing constants $a_p$ and $b_p$ for \eqref{e:iid-max-to-Gumbel} to hold.

\begin{lemma}\label{l:ap-bp-constants} {\em (i)} We have that
\begin{equation}\label{e:up-equivalence}
\widetilde u_p (M_p^* - \widetilde u_p) \stackrel{d}{\longrightarrow} \zeta\quad\mbox{ if and only if}\quad p \overline\Phi(\widetilde u_p) \to 1,  
\end{equation}
as $p\to\infty$.  In this case, $\widetilde u_p \sim \sqrt{2\log(p)}$ and more precisely 
\begin{equation}\label{e:up-up-star}
\sqrt{2\log(p)}( \widetilde u_p - u_p^*) \to 0,\ \ \mbox{ as }p\to\infty,
\end{equation}
where
\begin{equation}\label{e:up-tilde}
  u_p^* := \sqrt{2\log(p)} \left( 1- \frac{\log(\log(p)) + \log(4\pi)}{4 \log(p)} \right).
\end{equation}

{\em (ii)} Relation \eqref{e:iid-max-to-Gumbel} holds if and only if
$$
a_p \sim \sqrt{2\log(p)}\quad\mbox{ and }\quad p\overline \Phi(b_p)\to 1.
$$
In particular, by part (i), we have that \eqref{e:iid-max-to-Gumbel} holds with $a_p:=b_p$ and \eqref{e:up-up-star} holds
with $\widetilde u_p:=b_p$.
\end{lemma}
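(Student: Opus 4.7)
The plan is to reduce everything to Mills' ratio for the Gaussian tail, $\overline\Phi(x)\sim \phi(x)/x$ as $x\to\infty$, together with the exact identity $\P(M_p^*\le x)=\Phi(x)^p$. Since $\log(1-t)=-t(1+o(1))$ as $t\to 0$, the convergence $\Phi(\widetilde u_p+y/\widetilde u_p)^p\to \exp(-e^{-y})$ is equivalent to $p\overline\Phi(\widetilde u_p+y/\widetilde u_p)\to e^{-y}$ for each fixed $y\in\R$. I would factor
\[
p\overline\Phi(\widetilde u_p+y/\widetilde u_p)
\;=\; p\overline\Phi(\widetilde u_p)\cdot\frac{\overline\Phi(\widetilde u_p+y/\widetilde u_p)}{\overline\Phi(\widetilde u_p)},
\]
and use Mills' ratio (together with $\widetilde u_p\to\infty$, which is forced by either hypothesis) to show that the ratio on the right tends to $e^{-y}$. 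This yields the ``if'' direction of \eqref{e:up-equivalence}; the ``only if'' direction follows by specialising $y=0$, since $\widetilde u_p(M_p^*-\widetilde u_p)\stackrel{d}{\to}\zeta$ then forces $\Phi(\widetilde u_p)^p\to e^{-1}$, hence $p\overline\Phi(\widetilde u_p)\to 1$.

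For the sharp asymptotics \eqref{e:up-up-star}, I would take logarithms in $p\overline\Phi(\widetilde u_p)\to 1$ and use the refined Mills expansion $\overline\Phi(x)=\phi(x)x^{-1}(1+O(x^{-2}))$ to obtain
\[
\log p \;-\; \frac{\widetilde u_p^{\,2}}{2} \;-\; \log \widetilde u_p \;-\; \tfrac{1}{2}\log(2\pi) \;=\; o(1).
\]
A one-step bootstrap (first deduce $\widetilde u_p\sim\sqrt{2\log p}$, then substitute into $\log \widetilde u_p$) yields $\widetilde u_p^{\,2}=2\log p-\log\log p-\log(4\pi)+o(1)$. A direct expansion of $(u_p^*)^2$ from \eqref{e:up-tilde} produces the same right-hand side modulo an $O((\log\log p)^2/\log p)=o(1)$ error. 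Hence $\widetilde u_p^{\,2}-(u_p^*)^2=o(1)$, and factoring as $(\widetilde u_p-u_p^*)(\widetilde u_p+u_p^*)$ with $\widetilde u_p+u_p^*\sim 2\sqrt{2\log p}$ gives $\widetilde u_p-u_p^*=o(1/\sqrt{\log p})$, which is precisely \eqref{e:up-up-star}.

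For part \emph{(ii)}, I would invoke the convergence of types theorem. If $a_p(M_p^*-b_p)\stackrel{d}{\to}\zeta$, evaluating the limiting c.d.f.\ at $0$ yields $\Phi(b_p)^p\to e^{-1}$, so $p\overline\Phi(b_p)\to 1$; part \emph{(i)} with $\widetilde u_p=b_p$ then produces $b_p(M_p^*-b_p)\stackrel{d}{\to}\zeta$, and convergence of types forces $a_p/b_p\to 1$, so that $a_p\sim b_p\sim\sqrt{2\log p}$. Conversely, $p\overline\Phi(b_p)\to 1$ combined with part \emph{(i)} gives $b_p(M_p^*-b_p)\stackrel{d}{\to}\zeta$, and Slutsky's lemma with multiplier $a_p/b_p\to 1$ upgrades this to $a_p(M_p^*-b_p)\stackrel{d}{\to}\zeta$.

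The only delicate step is the second-order expansion of $\widetilde u_p$: one must carry the Mills' remainder $O(1/\widetilde u_p^{\,2})$ and the $\log \widetilde u_p$ term accurately enough to get an $o(1)$ bound on $\widetilde u_p^{\,2}-(u_p^*)^2$, so that division by $\widetilde u_p+u_p^*\sim 2\sqrt{2\log p}$ produces the required $o(1/\sqrt{\log p})$ rate in \eqref{e:up-up-star}. Everything else reduces to standard Mills' ratio asymptotics and the convergence of types argument.
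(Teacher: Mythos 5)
Your proposal is correct, and its skeleton matches the paper's: both reduce \eqref{e:up-equivalence} to Mills' ratio asymptotics (your multiplicative factorization of $p\overline\Phi(\widetilde u_p + y/\widetilde u_p)$ is the same computation the paper carries out additively on the logarithmic scale via its quantity $A_p$), both obtain the converse by evaluating the limit law at $0$, and both derive part (ii) from the convergence of types theorem. The one genuine divergence is in the proof of \eqref{e:up-up-star}. The paper verifies that $u_p^*$ from \eqref{e:up-tilde} itself satisfies \eqref{e:log-quantile-relation}, so that both $u_p^*$ and $\widetilde u_p$ are admissible normalizations in \eqref{e:up-equivalence}, and then invokes convergence of types a second time to conclude $u_p^*(u_p^* - \widetilde u_p)\to 0$. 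You instead bootstrap $\widetilde u_p^{\,2} = 2\log p - \log\log p - \log(4\pi) + o(1)$ directly from \eqref{e:log-quantile-relation}, expand $(u_p^*)^2$ to the same expression up to an $O((\log\log p)^2/\log p)$ error, and factor the difference of squares. Your route is more elementary and self-contained for that step (no second appeal to a limit theorem), at the cost of carrying the second-order expansion explicitly; the paper's route is slightly slicker because it reuses the equivalence already proved, but it still requires essentially the same expansion of $(u_p^*)^2$ to check \eqref{e:log-quantile-relation} for $u_p^*$. Both arguments are sound and yield the stated $o(1/\sqrt{\log p})$ rate.
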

\begin{proof} {\em Part (i).} Observe that by the Mill's ratio (cf Lemma \ref{l:Mills_ratio}),
$p \overline \Phi(\widetilde u_p) \to 1$ is equivalently expressed as follows:
$$
p \overline \Phi(\widetilde u_p) \sim p \frac{\phi(\widetilde u_p)}{\widetilde u_p} \to 1,\quad\mbox{as }p\to\infty,
$$
where $\phi(x) = \exp\{-x^2/2\}/\sqrt{2\pi}$ is the standard Normal density. By taking logarithms,
the above asymptotic relation is equivalent to having
\begin{equation}\label{e:log-quantile-relation}
\log(p) - \frac{\widetilde{u_p}^2}{2} - \log(\widetilde u_p) - \frac{1}{2}\log(2\pi)\to 0.
\end{equation}

We first prove the `if' direction of part (i).  Suppose that $p\overline \Phi(\widetilde u_p)\to 1$, or equivalently, 
\eqref{e:log-quantile-relation} holds. Then, one necessarily has $\widetilde u_p\to \infty$. It is easy to see that 
\eqref{e:iid-max-to-Gumbel} holds with $a_p:= \widetilde u_p$ and 
$b_p:= \widetilde u_p,$ provided that,  for all $x\in\mathbb R$,
\begin{equation}\label{e:up-limit}
\Phi\left( \widetilde u_p + \frac{x}{\widetilde u_p}\right)^p \to \Lambda(x),\quad\mbox{as }p\to\infty.
\end{equation}
The latter, upon taking logarithms and using the fact that $\log(1+z) \simeq z,$ as $z\to 0$,
is equivalent to having
\begin{equation}\label{e:iid-max-to-Gumbel-1}
p \overline \Phi\left( \widetilde  u_p + \frac{x}{\widetilde  u_p}\right) \to -\log (\Lambda(x)) = e^{-x}.
\end{equation}
To prove that \eqref{e:iid-max-to-Gumbel-1} holds, as argued above, using the Mill's ratio, it is 
equivalent to verify that
$$
A_p:= \log(p) - \frac{1}{2} \left( \widetilde  u_p + x/{\widetilde  u_p}\right)^2 
 - \log\left( \widetilde  u_p + x/{\widetilde  u_p}\right) - \frac{1}{2}\log(2\pi) \to -x,
$$
as $p\to\infty$.  Note that, upon expanding the square and manipulating the logarithm, we obtain
\begin{align*}
A_p = \log(p) - \frac{\widetilde {u_p}^2}{2} - \log(\widetilde  u_p) - \frac{1}{2} \log(2\pi)  - x - x^2/(2\widetilde{u_p}^2) - \log( 1+ x/\widetilde{u_p}^2). 
\end{align*}
In view of \eqref{e:log-quantile-relation} and the fact that $\widetilde{u_p}\to \infty$, we obtain that $A_p\to -x$, which yields 
\eqref{e:iid-max-to-Gumbel-1} and completes the proof of the `if' direction of part (i).

Now, to show the `only if' direction of part (i), suppose that \eqref{e:iid-max-to-Gumbel} holds with $a_p=b_p:= \widetilde u_p$, 
or, equivalently \eqref{e:up-limit} holds.  By letting $x=0$ in Relation \eqref{e:up-limit}, we see that $\widetilde u_p\to \infty$, 
and then, upon taking logarithms, necessarily $p\overline \Phi(\widetilde u_p)\to 1$, which completes the proof of \eqref{e:up-equivalence}.

\medskip
We now show \eqref{e:up-up-star}.  First, one can directly verify that \eqref{e:log-quantile-relation} holds with $\widetilde u_p$ replaced 
by $u_p^*$ in \eqref{e:up-tilde}. This, as argued above, is equivalent to $p \overline \Phi(u_p^*)\to 1$.  
Suppose now that, for another sequence $\widetilde u_p$, we have $p \overline \Phi(\widetilde u_p)\to 1$. Then, by the shown equivalence in \eqref{e:up-equivalence},
$$
u_p^*(M_n^* - u_p^*) \stackrel{d}{\to} \zeta \quad\mbox{ and }\quad \widetilde u_p (M_n^* - \widetilde u_p) \stackrel{d}{\to} \zeta.
$$
Thus, the convergence of types theorem (see, \textit{e.g.},Theorem 14.2 in \cite{Billingsley1995-BILMAP}) yields
$$
u_p^*\sim \widetilde u_p \quad \mbox{ and }\quad u_p^*( u_p^* - \widetilde u_p) \to 0.
$$
The last convergence implies the claim of part (ii) since in view of \eqref{e:up-tilde}, we have $u_p^*\sim \sqrt{2\log(p)}$.

Part (ii) is a direct consequence of the convergence to types theorem, as argued in the proof of part (i).\qed
\end{proof}
\medskip

The following result characterizes the optimal rate of concentration under an additional distributional convergence assumption, which holds under the Berman condition for e.g. the case of stationary time series.

\begin{proposition}\label{prop: on optimal rate}
Suppose that ${\cal E}$ is a {\em dependent} triangular Gaussian array, such that 
\begin{equation}\label{e:dependent-max}
\zeta_p:= a_p(M_p - b_p) \stackrel{d}{\longrightarrow}\zeta,\quad\mbox{as }p\to\infty,
\end{equation}
for some non-degenerate random variable $\zeta$, with the same constants as in the iid case \eqref{e:iid-max-to-Gumbel}. Suppose also that ${\mathbb P}\left(\zeta<x\right)>0$ and ${\mathbb P}\left(\zeta>x\right)>0$ for all $x\in\mathbb{R}.$ 

Let now the sequence $\delta_p\to 0$, be an upper bound on the
rate of concentration, i.e., we have 
\begin{equation}\label{def: opt rate URS lim}
{\mathbb P}\left(\left|\frac{M_p}{a_p}-1\right|>\delta_p\right)\to 0, \quad p\to\infty.
\end{equation}

\noindent The following two statements hold.

\begin{itemize}
    \item[(a)] When $\limsup_{p\to\infty}a_p|b_p-a_p|<\infty,$ Relation  \eqref{def: opt rate URS lim} holds if and only if 
    \begin{equation}\label{def: delta_p opt}
   \delta_p\gg \frac{1}{a_p^2}+\left|\frac{b_p}{a_p}-1\right|=:\delta_p^{opt}.   
    \end{equation}
    
    \item[(b)] When $\limsup_{p\to\infty}a_p|b_p-a_p|=\infty,$ Relation \eqref{def: opt rate URS lim} holds if and only if 
    \begin{equation}\label{prop: on optimal rate: liminf}
    \liminf_{p\to\infty}\left[\frac{\delta_p}{\delta_p^{opt}}-1\right](1+a_p|b_p-a_p|)=\infty.    
    \end{equation}
\end{itemize}
\end{proposition}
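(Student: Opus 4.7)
The plan is to reduce the statement to a clean condition involving only the distributional limit $\zeta$. Using the hypothesis $\zeta_p := a_p(M_p - b_p) \stackrel{d}{\to} \zeta$, one has the algebraic identity
\begin{equation*}
\frac{M_p}{a_p}-1 \;=\; \frac{b_p - a_p}{a_p} + \frac{\zeta_p}{a_p^2},
\end{equation*}
so that, setting $c_p := a_p(b_p-a_p)$ and $\epsilon_p := a_p^2\delta_p$, the event appearing in \eqref{def: opt rate URS lim} is exactly $\{|\zeta_p + c_p| > \epsilon_p\}$. Both parts (a) and (b) thus reduce to characterizing when $\mathbb{P}(|\zeta_p + c_p| > \epsilon_p) \to 0$ in terms of the two deterministic sequences $c_p$ and $\epsilon_p$.

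The technical core is the following auxiliary lemma, which I would isolate and prove first: if $\zeta_p \Rightarrow \zeta$ with $\mathbb{P}(\zeta > x) > 0$ and $\mathbb{P}(\zeta < x) > 0$ for every $x\in\mathbb{R}$, and $(c_p), (\epsilon_p)$ are arbitrary real sequences, then
\begin{equation*}
\mathbb{P}\bigl(|\zeta_p + c_p| > \epsilon_p\bigr) \longrightarrow 0 \ \iff \ \epsilon_p - |c_p| \longrightarrow \infty.
\end{equation*}
To prove it, I would split the probability as $\mathbb{P}(\zeta_p > \epsilon_p - c_p) + \mathbb{P}(\zeta_p < -\epsilon_p - c_p)$ and establish the elementary fact that, under these hypotheses, $\mathbb{P}(\zeta_p > x_p) \to 0$ iff $x_p \to \infty$, and symmetrically for the lower tail. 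That fact is the one delicate step: if $x_p$ failed to go to $\infty$, one could pass along a subsequence to a finite limit $x^\star$ at which the CDF of $\zeta$ is continuous, invoke the Portmanteau theorem, and use $\mathbb{P}(\zeta > x^\star) > 0$ to derive a contradiction. Combining the two tails, the joint requirement $\epsilon_p - c_p \to \infty$ and $\epsilon_p + c_p \to \infty$ is precisely $\epsilon_p - |c_p| \to \infty$.

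Finally, I would translate the lemma back using
\begin{equation*}
a_p^2 \delta_p^{opt} \;=\; 1 + a_p|b_p - a_p| \;=\; 1 + |c_p|,
\end{equation*}
which yields
\begin{equation*}
\epsilon_p - |c_p| \;=\; a_p^2 (\delta_p - \delta_p^{opt}) + 1 \;=\; \left[\frac{\delta_p}{\delta_p^{opt}} - 1\right]\bigl(1 + a_p|b_p - a_p|\bigr) + 1.
\end{equation*}
Part (b) is then immediate: the lemma's criterion $\epsilon_p - |c_p| \to \infty$ is exactly the $\liminf$-to-$\infty$ condition \eqref{prop: on optimal rate: liminf}, the additive $1$ being negligible. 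For part (a), boundedness of $|c_p|$ places $a_p^2 \delta_p^{opt}$ in a compact interval $[1,C]$, so $\epsilon_p - |c_p|\to\infty$ collapses to $\epsilon_p\to\infty$, which is in turn equivalent to $\delta_p/\delta_p^{opt}\to\infty$, i.e.\ $\delta_p \gg \delta_p^{opt}$, as asserted in \eqref{def: delta_p opt}. Aside from the subsequential argument in the auxiliary tail claim, everything is purely algebraic.
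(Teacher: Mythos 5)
Your proposal is correct, and its core mechanism coincides with the paper's treatment of part (b): after the substitution $c_p=a_p(b_p-a_p)$, $\epsilon_p=a_p^2\delta_p$, the paper also splits $\mathbb{P}(|\zeta_p+c_p|>\epsilon_p)$ into the two one-sided tails, shows via weak convergence and the everywhere-positive-tails assumption that each vanishes iff its threshold diverges, and performs exactly your final algebraic identity $\epsilon_p-|c_p|=\bigl[\delta_p/\delta_p^{opt}-1\bigr]\left(1+a_p|b_p-a_p|\right)+1$. Where you genuinely differ is in isolating this as a standalone lemma, $\mathbb{P}(|\zeta_p+c_p|>\epsilon_p)\to0\iff\epsilon_p-|c_p|\to\infty$, and then deriving part (a) as a corollary of the same criterion (boundedness of $|c_p|$ collapsing it to $\epsilon_p\to\infty$, hence to $\delta_p\gg\delta_p^{opt}$). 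The paper instead proves part (a) separately: the ``if'' direction by a Slutsky argument from $M_p/a_p-1=\zeta_p/a_p^2+(b_p/a_p-1)=o_P(\delta_p)$, and the ``only if'' direction by extracting a subsequence with $\delta_{p(n)}\le c\,\delta_{p(n)}^{opt}$ and bounding the tail probability below by $\mathbb{P}(\zeta>c+(c+1)M)>0$. Your unified route is more economical and makes transparent why the two cases in the statement are really one criterion; the paper's split buys a self-contained, more elementary proof of (a) that does not require first establishing the sharp two-sided tail characterization. One minor point of care in your auxiliary tail claim: if $x_p\not\to\infty$, a bounded-above subsequence need not converge to a finite $x^\star$ (it may drift to $-\infty$); the cleaner fix is to bound $\mathbb{P}(\zeta_{p(n)}>x_{p(n)})\ge\mathbb{P}(\zeta_{p(n)}>K')$ directly for a continuity point $K'$ above the bound, which is what the paper effectively does.
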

\begin{proof}
{{\em (a)}}
We will start with the ``if'' direction. Relation \eqref{e:dependent-max} implies that
$$
\frac{1}{a_p} M_p = \frac{\zeta_p}{a_p^2} + \frac{b_p}{a_p}.
$$
Since by assumption the constants $a_p$ and $b_p$ are the same as in the iid case \eqref{e:iid-max-to-Gumbel}, Lemma \ref{l:ap-bp-constants} entails that $b_p\sim a_p \sim \sqrt{2\log(p)}.$ Hence
\begin{equation}\label{e:ap-rate}
\frac{1}{a_p} M_p -1 = \frac{\zeta_p}{a_p^2} + \left( \frac{b_p}{a_p} -1\right) \to 0,
\end{equation}
which shows that the distributional limit in \eqref{e:dependent-max} entails concentration of the maxima $M_p/a_p$ to $1$. Relations \eqref{def: delta_p opt} and \eqref{e:ap-rate}, however imply that $$\left|\frac{M_p}{a_p}-1\right|=o_P(\delta_p),$$
which entails \eqref{def: opt rate URS lim} by Slutsky  (or also Lemma \ref{l:fastest-rate}, below.)

Now, for the converse direction, suppose that \eqref{def: opt rate URS lim} holds for some $\delta_p\not\gg\delta_p^{opt}.$ This means that we can find a subsequence $p(n)$ so that $\delta_{p(n)}\leq c\cdot\delta_{p(n)}^{opt},\ \forall n\in\mathbb{N}$, for a positive constant $c$ that does not depend on $n$. In view of \eqref{def: opt rate URS lim}, this would mean that 
$$\theta_n:={\mathbb P}\left(\left|\frac{M_{p(n)}}{a_{p(n)}}-1\right|>c\delta_{p(n)}^{opt}\right)\to0,\quad n\to\infty.$$
Moreover, since $\limsup_{p\to\infty}a_p|b_p-a_p|<\infty,$ and $a_p>0$, the sequence $(a_p|b_p-a_p|)_{p=1}^{\infty}$ is bounded. Namely, there exists $M>0$, such that   $0\leq a_p|b_p-a_p|\leq M,$ for all $p\in\mathbb{N}.$
However, we have that 
\begin{align*}
\theta_n & \geq {\mathbb P}\left(\frac{M_{p(n)}}{a_{p(n)}}-1>c\delta_{p(n)}^{opt}\right) = {\mathbb P}\left(\frac{\zeta_{p(n)}}{a_{p(n)}^2}+\frac{b_{p(n)}}{a_{p(n)}}-1>\frac{c}{a_{p(n)}^2}+c\left|\frac{b_{p(n)}}{a_{p(n)}}-1\right|\right)\\
& = {\mathbb P}\left(\zeta_{p(n)}+a_{p(n)}(b_{p(n)}-a_{p(n)})-c|a_{p(n)}(b_{p(n)}-a_{p(n)})|>c\right)\\
& \geq {\mathbb P}\left(\zeta_{p(n)}-(c+1)a_{p(n)}|b_{p(n)}-a_{p(n)}| >c\right)\\ & \geq {\mathbb P}\left(\zeta_{p(n)}>c+(c+1)a_{p(n)}|b_{p(n)}-a_{p(n)}|\right)\\ & \geq {\mathbb P}(\zeta_{p(n)}>c+(c+1)M)\\
& \to {\mathbb P}(\zeta>c+(c+1)M)>0, 
\end{align*}
where the last convergence holds because $\zeta_{p(n)}\stackrel{d}{\to}\zeta$. This is a contradiction and the proof is complete.\\

{{\em (b)}} We have that \begin{align*}
{\mathbb P}\left(\left|\frac{M_p}{a_p}-1\right|>\delta_p\right) & = {\mathbb P}\left(a_p\left|M_p-a_p\right|>\delta_p a_p^2\right) = {\mathbb P}\left(\left|\zeta_p+a_p(b_p-a_p)\right|>\delta_pa_p^2\right)\\ & = {\mathbb P}\left(\zeta_p<-\delta_pa_p^2-a_p(b_p-a_p)\right)+{\mathbb P}\left(\zeta_p>\delta_pa_p^2-a_p(b_p-a_p)\right)\\ & =: A(p)+B(p).    
\end{align*}
Note, however, that \eqref{def: opt rate URS lim} entails that both $A(p)$ and $B(p)$ vanish to 0, as $p\to\infty.$
This in turn means that \begin{equation}\label{eq: liminf }
\liminf_{p\to\infty}(\delta_pa_p^2-a_p(b_p-a_p))=\infty\quad\text{and}\quad \liminf_{p\to\infty}(\delta_pa_p^2+a_p(b_p-a_p))=\infty,
\end{equation} 
because of the distributional convergence \eqref{e:dependent-max}. We will work with $B(p)$. The result for $A(p)$ can be obtained by similar arguments. At first, for $B(p)$ to vanish to 0, we do need $\delta_pa_p^2>a_p(b_p-a_p)$ eventually. Suppose that $\liminf_{p\to\infty}(\delta_pa_p^2-a_p(b_p-a_p))=c<\infty,$ where $c\geq0$. This would mean that there is a subsequence $p(n)$ such that $$\delta_{p(n)}a_{p(n)}^2-a_{p(n)}(b_{p(n)}-a_{p(n)})\to c,\quad p\to\infty.$$
But then, $$B(p(n))={\mathbb P}\left(\zeta_{p(n)}>\delta_{p(n)}a_{p(n)}^2-a_{p(n)}(b_{p(n)}-a_{p(n)})\right)\to{\mathbb P}(\zeta>c)>0,$$
which contradicts the fact that $B(p)\to 0,$ as $p\to\infty.$\\ Finally, note that \eqref{eq: liminf } is equivalent to $\liminf_{p\to\infty}(\delta_p a_p^2-a_p|b_p-a_p|)=\infty,$ which with straightforward algebra can be expressed as \eqref{prop: on optimal rate: liminf}. Indeed, 
\begin{align*}
\delta_pa_p^2-a_p|b_p-a_p| & = a_p^2\left[\delta_p -\left|\frac{b_p}{a_p}-1\right|\right]= a_p^2\left[\delta_p-\delta_p^{opt}\right] +1\\ & = a_p^2\delta_p^{opt}\left[\frac{\delta_p}{\delta_p^{opt}}-1\right]+1\\ &= \left[\frac{\delta_p}{\delta_p^{opt}}-1\right]\left(1+a_p|b_p-a_p|\right)+1,
\end{align*}
which completes the proof. \qed
\end{proof}

\begin{remark}[On the optimality of the rate $\delta_p^{opt}$] \label{re: optimal rate  delta_opt}
The rate $\delta_p^{opt}$ can be viewed as ``the'' optimal rate of concentration in \eqref{def: opt rate URS lim} in the sense of \eqref{def: delta_p opt} and \eqref{prop: on optimal rate: liminf}.  As pointed out by an anonymous referee, the distributional convergence in \eqref{e:dependent-max} 
(whenever it takes place) is much more informative than a simple concentration of maxima type convergence.  Specifically, by Lemma 
\ref{l:ap-bp-constants} (ii), one can take $u_p = a_p = b_p$, and in this case Relation \eqref{e:ap-rate} implies that
$1/a_p^2 \propto 1/\log(p)$ is both an upper and lower bound on the rate of concentration.  That is, the rate 
$\delta_p^{opt} = 1/a_p^2 \propto 1/\log(p)$ cannot be improved and in this sense is the optimal rate at which the maxima can concentrate.   
The rate of concentration, though, does depend on the choice of the normalization sequence $u_p$.  We elaborate on this point next.
\end{remark}

{\bf The role of the sequence $u_p$.} 
It is well-known that under quite substantial dependence, the  convergence in distribution \eqref{e:dependent-max} holds, with the 
{\em same constants} as in the independent case. For example, suppose that $\epsilon_p(i) = Z(i),\ i\in \mathbb Z$ come from a stationary 
Gaussian time series, which satisfies the so-called 
{\em Berman condition} \citep{berman1964limit}:
$$
{\rm Cov}(Z(k), Z(0)) = o\left( \frac{1}{\log(k)} \right),\quad\mbox{as } k\to\infty.
$$

Notice, by Lemma \ref{l:ap-bp-constants} (ii), however, we also have $\widetilde \zeta_p := b_p(M_p - b_p) \stackrel{d}{\to} \zeta$, and
\begin{equation}\label{e:bp-rate}
\frac{1}{b_p} M_p - 1 = \frac{\widetilde \zeta_p}{b_p^2} = {\cal O}_P\left( \frac{1}{\log(p)} \right).
\end{equation}

Compare Relations \eqref{e:ap-rate} and \eqref{e:bp-rate}.  Since $a_p\sim b_p \sim \sqrt{2\log(p)}$, from \eqref{e:bp-rate}, 
we have that the rate of concentration of $M_p$ relative to the sequence $b_p$ is $1/\log(p)$. On the other hand, while the first term in 
the right-hand side of \eqref{e:ap-rate} is of order $1/\log(p)$ the presence of the second term can only make the rate of concentration therein {\em slower}.
Indeed, this is formally established in Lemma \ref{l:fastest-rate}. To gain some more intuition that the poor choice of a sequence $a_p$ can lead to a
slower rate of concentration, suppose that $a_p = b_p/(1+g(p))$, for an arbitrary sequence $g(p)>-1$, such that $g(p)\to 0$.  Then, by \eqref{e:ap-rate},
$$
\frac{1}{a_p} M_p -1 = \frac{\zeta_p}{a_p^2} + g(p).
$$
One can take $g(p)\to 0$ arbitrarily slow.  Finally, as a more concrete example, one typically uses $b_p:= u_p^* = \sqrt{2\log(p)} (1 - (\log(\log(p)) + \log(4\pi))/4\log(p))$ and $a_p:= \sqrt{2\log(p)}$. It is easily seen that $b_p = a_p (1+g(p))$, where
$$
g(p) = - \frac{\log(\log(p)) + \log(4\pi)}{4\log(p)} \propto \frac{\log(\log(p))}{\log(p)}.
$$
This shows that, in particular, in the case of iid maxima (as well as in the general case where \eqref{e:dependent-max} holds) 
the normalization $\sqrt{2\log(p)}$ {\em does not lead} to the optimal rate, since
$$
\frac{1}{\sqrt{2\log(p)}} M_p^* -1 \propto_P \frac{\log(\log(p))}{\log(p)},
$$
where $\xi_p \propto_P \eta_p$ means that $\xi_p/\eta_p \to c$ in probability, for some positive constant $c$.

The optimal rate is $1/\log(p)$ and it is obtained by normalizing with any sequence $b_p$ such that $p\overline \Phi(b_p) \to 1$. This follows from the next simple result, which shows that the rate of concentration in \eqref{e:ap-rate} is the slower of the rates
$1/a_p^2$ and $(b_p-a_p)/a_p$. 

\begin{lemma}\label{l:fastest-rate} Suppose that for some random variables $\zeta_p$, we have 
$\zeta_p\stackrel{d}{\to} \zeta$, as $p\to\infty$, where $\zeta$ is a non-constant random variable.
Then, for all sequences $\alpha_p$ and $\beta_p$, we have $$
\alpha_p \zeta_p + \beta_p\stackrel{P}{\longrightarrow} 0\quad\Longleftrightarrow\quad |\alpha_p| + |\beta_p| \longrightarrow 0.
$$

That is, the {\em rate} of $\alpha_p \zeta_p + \beta_p$ is always the slower of the rates of $\{\alpha_p\}$ and $\{\beta_p\}$.
\end{lemma}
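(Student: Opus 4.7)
The plan is to prove the two directions separately, with the easy direction handled by tightness and Slutsky, and the nontrivial direction handled by a subsequence-compactness argument that exploits the non-degeneracy of $\zeta$.

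For the ``if'' direction, I would note that $\zeta_p\stackrel{d}{\to}\zeta$ implies the sequence $(\zeta_p)$ is tight, so whenever $\alpha_p\to 0$ we have $\alpha_p\zeta_p\to 0$ in probability. Adding the deterministic sequence $\beta_p\to 0$ then gives $\alpha_p\zeta_p+\beta_p\to 0$ in probability by Slutsky's lemma.

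For the ``only if'' direction, I would argue by contradiction. Suppose $|\alpha_p|+|\beta_p|\not\to 0$; then there is a subsequence $p_k$ along which $|\alpha_{p_k}|+|\beta_{p_k}|$ stays bounded away from $0$. By passing to a further subsequence and using compactness of $[-\infty,\infty]$, I may assume $\alpha_{p_k}\to\alpha$ and $\beta_{p_k}\to\beta$ in the extended reals. I would then split into three cases. If $\alpha=0$, then by tightness of $(\zeta_{p_k})$, $\alpha_{p_k}\zeta_{p_k}\to 0$ in probability, forcing $\beta_{p_k}\to 0$, contradicting the choice of subsequence. If $\alpha\in\mathbb{R}\setminus\{0\}$, then joint convergence $(\alpha_{p_k},\zeta_{p_k})\stackrel{d}{\to}(\alpha,\zeta)$ together with the continuous mapping theorem gives $\alpha_{p_k}\zeta_{p_k}\stackrel{d}{\to}\alpha\zeta$, which is non-degenerate since $\zeta$ is; but then $\beta_{p_k}=(\alpha_{p_k}\zeta_{p_k}+\beta_{p_k})-\alpha_{p_k}\zeta_{p_k}$ would express a deterministic sequence as $o_P(1)$ minus a distributionally non-degenerate sequence, which is impossible because any distributional limit of deterministic sequences is a constant. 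Finally, if $|\alpha|=\infty$, I would divide $\alpha_{p_k}\zeta_{p_k}+\beta_{p_k}\to 0$ in probability by $\alpha_{p_k}$, obtaining $\zeta_{p_k}+\beta_{p_k}/\alpha_{p_k}\to 0$ in probability; passing to a further subsequence where $\beta_{p_k}/\alpha_{p_k}$ converges in $[-\infty,\infty]$, either the limit is a finite $-c$ and $\zeta_{p_k}\to c$ in probability forces $\zeta$ to be degenerate, or the limit is infinite and $(\zeta_{p_k})$ fails to be tight, both contradictions.

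The only delicate point is the case $\alpha\in\mathbb{R}\setminus\{0\}$: one must be careful that the deterministic sequence $\beta_{p_k}$ cannot match a non-degenerate distributional limit. The cleanest way I see is the observation that any distributional subsequential limit of a deterministic real sequence is a Dirac mass (including possibly at $\pm\infty$ if one allows improper limits, which can be ruled out by tightness of $\alpha_{p_k}\zeta_{p_k}+\beta_{p_k}$). Everything else is bookkeeping with Slutsky and continuous mapping, so no genuine obstacle should remain.
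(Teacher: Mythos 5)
Your proof is correct, but it takes a genuinely different route from the paper. The paper proves the nontrivial direction via Skorokhod's representation theorem: it replaces $\zeta_p$ by almost-surely convergent copies $\zeta_p^*\to\zeta^*$, extracts a further subsequence along which $\alpha_{q(n)}\zeta_{q(n)}^*(\omega)+\beta_{q(n)}\to 0$ for almost every $\omega$, picks two sample points $\omega_1,\omega_2$ with $\zeta^*(\omega_1)\neq\zeta^*(\omega_2)$ (possible since $\zeta$ is non-constant), and subtracts the two instances to isolate $\alpha_{q(n)}\to 0$ directly, after which $\beta_{q(n)}\to 0$ follows. You instead argue by contradiction with a compactness argument in $[-\infty,\infty]$ for the subsequential limits $\alpha$ and $\beta$, splitting into the three cases $\alpha=0$, $\alpha$ finite nonzero, and $|\alpha|=\infty$, and in each case you reach a contradiction using only tightness, Slutsky, and the fact that a deterministic sequence can only have a Dirac mass as a distributional limit (a convergence-of-types observation). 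Both arguments are sound. The paper's approach is shorter and avoids case-splitting, at the price of invoking Skorokhod representation plus the a.s.-subsequence extraction from convergence in probability; yours is more elementary in its toolkit and makes the role of the non-degeneracy hypothesis visible in each case, at the price of the three-way case analysis. You correctly flag and resolve the one delicate point in your middle case, namely ruling out $\beta=\pm\infty$ by the tightness of $\alpha_{p_k}\zeta_{p_k}+\beta_{p_k}$. No gap remains.
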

\begin{proof} The '$\Leftarrow$' direction follows from Slutsky. To prove '$\Rightarrow$', it is enough to show that for every $p(n)\to \infty$, there is a further sub-sequence $q(n)\to \infty$, $\{q(n)\}\subset\{p(n)\}$, such that
$$
|\alpha_{q(n)}| + |\beta_{q(n)}| \longrightarrow 0.
$$
In view of Skorokhod's representation theorem (Theorem 6.7, page 70 in \cite{billingsley2013convergence}), we may suppose that $\zeta_p^* \to \zeta^*$, with probability one, where $\zeta_p^* \stackrel{d}{=}\zeta_p$ and
 $\zeta^* \stackrel{d}{=}\zeta$.  Also, assuming that $\alpha_{p(n)} \zeta_{p(n)}^* + \beta_{p(n)} \to 0$, in probability,
 implies that there is a further sub-sequence $q(n)\to\infty$, such that 
 \begin{equation}\label{e:l:fastest-rate}
  \alpha_{q(n)}  \zeta_{q(n)}^*(\omega) + \beta_{q(n)} \to 0,\quad\mbox{as }q(n)\to\infty,
 \end{equation}
 for $P$-almost all $\omega$. Since also $\zeta_{q(n)}^*(\omega) \to \zeta^*(\omega)$, for $P$-almost all $\omega$, and since $\zeta^*$ is non-constant, 
 we have $\zeta_{q(n)}^*(\omega_i) \to \zeta^*(\omega_i),\ i=1,2$ for some $\zeta^*(\omega_1)\not= \zeta^*(\omega_2)$.

 Thus, by subtracting two instances of Relation \eqref{e:l:fastest-rate} corresponding to $\omega=\omega_1$ and $\omega=\omega_2$, we obtain
 $$
 \alpha_{q(n)}  (\zeta_{q(n)}^*(\omega_1) - \zeta_{q(n)}^*(\omega_2)) \to 0,
 $$
 which since $(\zeta_{q(n)}^*(\omega_1) - \zeta_{q(n)}^*(\omega_2))\to \zeta^*(\omega_1) - \zeta^*(\omega_2) \not = 0$, implies
 $\alpha_{q(n)} \to 0$.  This, in view of \eqref{e:l:fastest-rate} yields $\beta_{q(n)}
 \to 0$, and completes the proof. \qed
\end{proof}

\medskip
\noindent
\begin{remark}\label{rem: conjecture}
The above considerations establish the optimal rate of concentration of the maxima $M_p = \max_{i\in[p]} \epsilon_p(i)$, 
whenever the limit in distribution \eqref{e:dependent-max} holds.  We have shown that this optimal rate is $1/\log(p)$ and is in fact obtained, 
when considering $M_p/u_p$, for $p\overline \Phi(u_p) \sim 1$.  The rate of concentration of $M_p/\sqrt{2\log(p)}$ is $\log(\log(p))/\log(p)$, which is 
only slightly sub-optimal. 

On the other hand, as we know by Theorem \ref{thm: UDD equiv URS}, uniform relative stability is equivalent to UDD and hence
the concentration of maxima phenomenon takes place even if \eqref{e:dependent-max} fails to hold.  At this point, we do not know what is the optimal 
rate in general. In Section \ref{sec:main}, we provide upper bounds on this rate.  We conjecture, however, the presence of more severe
dependence can only lead to slower rates of concentration and in particular the optimal rate of concentration for UDD arrays cannot be faster 
than $1/\log(p)$ -- the one for independent maxima.
\end{remark} 

\begin{conjecture}\label{con: optimal rate}
Let $\mathcal{E}$ be a Gaussian URS array. Relation \eqref{def: URS with delta_p} implies $\delta_p\gg 1/\log(p).$
\end{conjecture}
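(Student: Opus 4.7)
The plan is to argue by contradiction. Suppose some $\delta_p \to 0$ with $\delta_p \not\gg 1/\log(p)$ satisfies $\P(|M_p/u_p - 1| > \delta_p) \to 0$; the goal is to exhibit, along a subsequence, a positive lower bound on $\P(M_p \leq u_p(1-\delta_p))$, contradicting the hypothesized convergence. The iid benchmark is already implicit in Section \ref{sec:optimal}: by Mill's ratio (Lemma \ref{l:Mills_ratio}) and the defining relation $p\overline\Phi(u_p)\sim 1$,
$$
\frac{p\overline\Phi(u_p(1-\delta_p))}{p\overline\Phi(u_p)} \sim \exp\bigl(u_p^2\delta_p(1+o(1))\bigr),
$$
which remains bounded whenever $u_p^2\delta_p = 2\log(p)\delta_p$ does, i.e.\ when $\delta_p = O(1/\log(p))$ along a subsequence. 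Consequently $\Phi(u_p(1-\delta_p))^p \sim \exp(-p\overline\Phi(u_p(1-\delta_p)))$ stays bounded away from $0$ along this subsequence, settling the conjecture for iid arrays and, via Slepian's inequality, for any URS Gaussian array whose off-diagonal covariances are all non-negative.

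To pass to the general UDD case, the natural first step is to extract a ``nearly independent'' subset $S_p \subseteq [p]$ and compare with an iid Gaussian maximum on $S_p$. Since UDD bounds the degree of the $\tau$-correlation graph on $[p]$ by $N_{\mathcal{E}}(\tau)-1$, a greedy independent-set argument yields an $S_p$ with $|S_p|\geq p/N_{\mathcal{E}}(\tau)$ and all pairwise covariances on $S_p$ at most $\tau$. Choosing $\tau=\tau_p\downarrow 0$ slowly enough that $|S_p|\to\infty$ while $\tau_p\log|S_p|\to 0$, Slepian's inequality allows one to compare $\max_{i \in S_p}\epsilon_p(i)$ with a correlation-$\tau_p$ exchangeable Gaussian max, for which a Gumbel-type analysis mirrors the iid computation above.

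The principal obstacle --- and what we expect to be the hard part --- is that this subset comparison yields the wrong direction of inequality: since $M_p \geq \max_{i \in S_p}\epsilon_p(i)$, any \emph{lower} bound on $\P(\max_{i \in S_p}\epsilon_p(i)\leq t)$ only gives an \emph{upper} bound on $\P(M_p\leq t)$. To furnish the required \emph{lower} bound on $\P(M_p\leq t)$, one must simultaneously control the coordinates in $[p]\setminus S_p$, whose covariances may be of arbitrary sign under UDD alone. Classical one-sided Gaussian comparison tools (Slepian, Gordon, Sudakov--Fernique) do not give lower bounds on half-space orthant probabilities in the presence of mixed-sign correlations, and Royen's Gaussian correlation inequality applies only to symmetric convex sets. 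Two plausible workarounds are: (i) condition on the $S_p$-coordinates and show, using UDD-controlled packing of the remaining covariance structure, that the conditional maximum over $[p]\setminus S_p$ does not exceed $u_p(1-\delta_p)$ with asymptotically positive probability; or (ii) derive a bespoke Gaussian anti-concentration estimate for $M_p$ expressed in terms of the packing numbers $N_{\mathcal{E}}(\tau)$ that feature in the main theorem of Section \ref{sec:main}. Either route appears to require genuinely new tools beyond the classical one-sided Gaussian comparison inequalities, which is presumably why the statement remains a conjecture.
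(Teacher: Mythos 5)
The statement you are addressing is labelled a \emph{conjecture} in the paper: the authors explicitly do not prove it (see Remark \ref{rem: conjecture}, ``At this point, we do not know what is the optimal rate in general''), so there is no paper proof to compare against. Your proposal is candid that it does not close the general case either, and your diagnosis of where it stalls is accurate. The parts you do complete are correct: the iid computation via Lemma \ref{l:Mills_ratio} giving $p\overline\Phi(u_p(1-\delta_p)) \asymp \exp\{u_p^2\delta_p(1+o(1))\}$, hence a lower-tail probability bounded away from zero along any subsequence with $\delta_p\log(p)$ bounded, is exactly the right argument and is consistent with what the paper already establishes in Section \ref{sec:optimal} (Proposition \ref{prop: on optimal rate} shows $1/a_p^2\propto 1/\log(p)$ is optimal whenever the distributional convergence \eqref{e:dependent-max} holds, which covers the iid case). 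The extension to arrays with nonnegative off-diagonal covariances via Slepian is a genuine, if modest, increment beyond what the paper records, since Slepian gives $\P(M_p\le t)\ge \P(M_p^*\le t)$ in that regime without requiring any distributional limit.

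The genuine gap is the one you name yourself: for a general UDD array with mixed-sign correlations, the packing/Sudakov--Fernique machinery of Section \ref{sec:main} and the one-sided comparison inequalities all produce bounds in the wrong direction for a \emph{lower} bound on $\P(M_p\le u_p(1-\delta_p))$. Extracting a weakly correlated subset $S_p$ only bounds $\P(M_p\le t)$ from \emph{above} by $\P(\max_{i\in S_p}\epsilon_p(i)\le t)$, which is useless here; and neither of your proposed workarounds (conditioning on the $S_p$-coordinates, or a bespoke anti-concentration estimate in terms of $N_{\mathcal{E}}(\tau)$) is carried out. So the proposal should be read as a correct proof of two special cases plus a well-informed obstruction analysis, not as a proof of the conjecture --- which is the same position the paper itself is in.
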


\section{Rates of uniform relative stability} \label{sec:main}

\subsection{Gaussian arrays}
Throughout Sections \ref{sec:main} and \ref{sec:proofs}, $\mathcal{E}=\{\epsilon_p(i),\ i\in[p]\}$ will be a Gaussian array with standard Normal marginals, unless stated otherwise. We shall also assume that $\mathcal{E}$ is URS. For simplicity of notation and without loss of generality we will work with $S_p=[p]$ (see Remark \ref{rem: WLOG |S_p|=p}). We will obtain upper bounds on the rate, i.e., sufficient conditions on the dependence structure of $\mathcal{E}$, which ensure certain rates. These results are of independent interest and will find concrete applications in Section \ref{sec: functions Gaussian}, where conditions ensuring the URS of functions of Gaussian arrays are established. 

The following definition is an ancillary tool for the comparison of the rates of two vanishing sequences and introduces some notation for this purpose. 
\begin{definition}\label{def: slower rate}
Let $(\alpha_p)_{p=1}^{\infty}$ and $(\beta_p)_{p=1}^{\infty}$ be two positive sequences converging to 0. We will say that  $\alpha_p$ is of lower order than $\beta_p$ (or slower than $\beta_p$) ,  denoted by $\alpha_p\gg \beta_p$, if $\beta_p/\alpha_p\to 0,$ as $p\to\infty$, i.e., $\beta_p=o(\alpha_p)$.
\end{definition}

The next theorem constitutes the main result of this paper.

\begin{theorem}\label{thm: capstone}
Consider a UDD Gaussian triangular array $\mathcal{E}=\{\epsilon_p(i),i\in[p]\}$  with standard Normal marginals and let $N_{\mathcal{E}}(\tau)$ be as in Definition \ref{def: UDD}. Let $\tau(p)\to0$ be such that \begin{equation}\label{def: alpha(p)}
    \alpha(p):=\log N_{\mathcal{E}}(\tau(p))/\log (p)\to0,\quad{\rm as}\ p\to\infty.
\end{equation}
Then, for all $\delta_p>0$ such that \begin{equation}\label{ineq: lb on the rate}
\delta_p\gg \alpha(p)+\tau(p)+\frac{1}{\log(p)},
\end{equation}
we have 
\begin{equation}\label{def: URS with delta_p}
{\mathbb P}\left(\left|\frac{\max_{i\in[p]}\epsilon_p(i)}{u_p}-1\right|>\delta_p\right)\to 0, \quad\mbox{as } p\to\infty.
\end{equation}
Here $u_p$ is defined as in \eqref{def: up} taking $F=\Phi$, the cumulative distribution function of standard Normal distribution.
\end{theorem}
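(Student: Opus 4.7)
\medskip

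\noindent\textbf{Proof proposal for Theorem \ref{thm: capstone}.}

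The plan is to split the two-sided event and bound the upper and lower deviations separately:
\begin{equation*}
\mathbb{P}\!\left(\left|\frac{M_p}{u_p}-1\right|>\delta_p\right)\leq \mathbb{P}\left(M_p>u_p(1+\delta_p)\right)+\mathbb{P}\left(M_p<u_p(1-\delta_p)\right).
\end{equation*}
The upper tail is handled by a union bound and Mill's ratio: since $p\overline\Phi(u_p)=1$ by \eqref{def: up}, we have
\begin{equation*}
\mathbb{P}\bigl(M_p>u_p(1+\delta_p)\bigr)\leq p\,\overline\Phi\bigl(u_p(1+\delta_p)\bigr)\asymp \frac{1}{u_p}\,p^{-2\delta_p-\delta_p^2},
\end{equation*}
which vanishes as soon as $\delta_p\log(p)\to\infty$, i.e.\ $\delta_p\gg 1/\log(p)$. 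This accounts for the $1/\log(p)$ summand in the hypothesis. No dependence enters here.

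The lower tail is where the UDD structure is genuinely exploited and is the main obstacle. I proceed in three steps.

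\emph{Step 1 (Extraction of a near-independent subset).} Form the ``correlation graph'' on $[p]$ with an edge between $i\neq j$ whenever $\mathrm{Cov}(\epsilon_p(i),\epsilon_p(j))>\tau(p)$. By Definition \ref{def: UDD} every vertex has degree at most $N_{\mathcal E}(\tau(p))$, so a greedy construction yields an independent set $J_p\subseteq[p]$ of size
\begin{equation*}
|J_p|\geq \frac{p}{N_{\mathcal E}(\tau(p))+1}=p^{1-\alpha(p)+o(1)}.
\end{equation*}
Since $\max_{i\in [p]}\epsilon_p(i)\geq \max_{i\in J_p}\epsilon_p(i)$, it suffices to bound the lower tail of the latter.

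\emph{Step 2 (Slepian comparison).} On $J_p$ all pairwise covariances are $\leq \tau(p)$. Introduce the equicorrelated auxiliary array $Y_i:=\sqrt{1-\tau(p)}\,Z_i+\sqrt{\tau(p)}\,W$ with $(Z_i),W$ i.i.d.\ standard Normal; these are standard Normal with pairwise covariance exactly $\tau(p)$. Slepian's lemma gives
\begin{equation*}
\mathbb{P}\bigl(\max_{i\in J_p}\epsilon_p(i)\leq t\bigr)\leq \mathbb{P}\bigl(\max_{i\in J_p} Y_i\leq t\bigr).
\end{equation*}
Conditioning on $W$ and splitting on $\{W\geq -K_p\}$ vs.\ $\{W<-K_p\}$ for a tuning parameter $K_p\to\infty$, I obtain the clean upper bound
\begin{equation*}
\mathbb{P}\bigl(\max_{i\in J_p}Y_i\leq t\bigr)\leq \overline\Phi(K_p)+\Phi\!\left(\frac{t+\sqrt{\tau(p)}\,K_p}{\sqrt{1-\tau(p)}}\right)^{|J_p|}.
\end{equation*}

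\emph{Step 3 (Asymptotic bookkeeping).} Set $t=u_p(1-\delta_p)$ and $s_p:=(t+\sqrt{\tau(p)}K_p)/\sqrt{1-\tau(p)}$. Using $\Phi(s_p)^{|J_p|}\leq \exp(-|J_p|\overline\Phi(s_p))$ and Mill's ratio, the log of the dominant term is
\begin{equation*}
\log|J_p|-\tfrac{1}{2}s_p^2-O(\log\log p)=\log(p)\bigl[2\delta_p-\alpha(p)-\tau(p)+o(1)\bigr]-O(\log\log p),
\end{equation*}
after expanding $(1-\delta_p)^2(1+\tau(p))$ and using $u_p^2\sim 2\log(p)$. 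Under \eqref{ineq: lb on the rate}, $\delta_p$ dominates each of $\alpha(p),\tau(p),1/\log(p)$, so $2\delta_p\log(p)\to\infty$ and the exponent tends to $+\infty$. Choosing $K_p\to\infty$ slowly enough that $\sqrt{\tau(p)}K_p u_p=o(\delta_p\log p)$ (possible since $\delta_p\gg\tau(p)$ and $\delta_p\log(p)\to\infty$) also forces $\overline\Phi(K_p)\to 0$, completing the proof.

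The principal difficulty lies in Step 3: all three error contributions $\alpha(p)$, $\tau(p)$, and $1/\log(p)$ must be kept track of to leading and subleading order, and the slack parameter $K_p$ must be tuned to simultaneously make $\overline\Phi(K_p)\to 0$ and keep the perturbation $\sqrt{\tau(p)}K_p$ of the Slepian variance split absorbed inside $2\delta_p\log(p)$. Steps 1 and 2 are comparatively routine (greedy independent set; standard Gaussian comparison).
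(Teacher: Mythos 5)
Your decomposition into upper and lower tails and your treatment of the upper tail coincide with the paper's (union bound plus Mill's ratio; the paper phrases it through $\delta_p^{-1}\E(\xi_p-1)_+\to 0$ and Markov, but the content is the same, and it indeed only needs $\delta_p\gg 1/\log(p)$). For the lower tail you take a genuinely different route. The paper also extracts a $\tau$-separated subset of size $\ge p/N_{\mathcal E}(\tau)$, but then applies the Sudakov--Fernique inequality to lower-bound $\E[\max_{i}\epsilon_p(i)/u_p]$ by $1-R_p$ with $R_p\asymp\alpha(p)+\tau(p)+2^{-p^{1-\alpha(p)}}$, and controls ${\mathbb P}(\xi_p<1-\delta_p)$ via Markov applied to $(\xi_p-1)_-$, which is what forces $\delta_p\gg R_p$. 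You instead bound the lower-tail \emph{probability} directly via Slepian against the equicorrelated array $\sqrt{1-\tau}\,Z_i+\sqrt{\tau}\,W$ and conditioning on $W$. This is legitimate (on your independent set all covariances are $\le\tau(p)$, so Slepian applies in the direction you need), and if the bookkeeping is done correctly it yields a stronger, stretched-exponential bound on the lower tail rather than the paper's $R_p/\delta_p$; both approaches produce the same rate condition $\delta_p\gg\alpha(p)+\tau(p)+1/\log(p)$.

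There is, however, a flaw in Step 3 as written that you must repair, because taken at face value it defeats the stated theorem. You claim the exponent is $\log(p)\bigl[2\delta_p-\alpha(p)-\tau(p)+o(1)\bigr]-O(\log\log p)$ and conclude it tends to $+\infty$ because $2\delta_p\log(p)\to\infty$. But $\delta_p\gg 1/\log(p)$ does \emph{not} imply $\delta_p\log(p)\gg\log\log(p)$ (take $\delta_p=\log\log\log(p)/\log(p)$), so a genuine $-O(\log\log p)$ term would send the exponent to $-\infty$ along such sequences; likewise an $o(1)$ inside the bracket, once multiplied by $\log(p)$, can swamp $2\delta_p\log(p)$ and must be sharpened to $o(\delta_p)$. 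The rescue is that the $\log\log p$ contributions actually cancel when one uses the exact quantile $p\overline\Phi(u_p)=1$: Mill's ratio gives $\tfrac12 u_p^2=\log p-\log u_p+O(1)$, and the $-\log s_p$ from $\overline\Phi(s_p)$ satisfies $\log s_p=\log u_p+o(1)$, so $\log|J_p|-\tfrac12 s_p^2-\log s_p=\log(p)\bigl[2\delta_p-\alpha(p)-\tau(p)+o(\delta_p)\bigr]-u_p\sqrt{\tau}K_p-\tfrac12\tau K_p^2+O(1)$, with the remaining error terms of order $\delta_p^2\log p=o(\delta_p\log p)$. (Had you normalized by $\sqrt{2\log p}$ instead of $u_p$, the $\log\log p$ term would be real, consistent with the paper's Section 3.) Your tuning of $K_p$ is fine: since $\tau(p)+1/\log(p)\ge 2\sqrt{\tau(p)/\log(p)}$, the hypothesis gives $\delta_p\sqrt{\log(p)/\tau(p)}\to\infty$, so $K_p$ can be sent to infinity while keeping $u_p\sqrt{\tau(p)}K_p=o(\delta_p\log p)$. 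With these corrections the argument closes.
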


The proof of Theorem \ref{thm: capstone} depends on a number of technical results, which will be presented and proved in Section \ref{sec:proofs}. In order to make the proof easier for the reader to follow,  we postpone its demonstration until Section \ref{sec:proofs}.
We proceed next with several comments and examples.

\begin{remark}\label{rem: N_epsilon tau uniform bound} 
Note that in Theorem \ref{thm: capstone} the covariance structure of $\mathcal{E}$ appears only through $N_{\mathcal{E}}(\tau).$ The collection  $\left\{N_{\mathcal{E}}(\tau),\ \tau\in (0,1)\right\}$ constitutes a collection of uniform upper bounds on the number of covariances in each row of the triangular array $\mathcal{E}$ that exceed the threshold $\tau$. This means that the ordering of the $p$ random variables in each row of $\mathcal{E}$ is irrelevant. 
\end{remark}

\begin{remark}\label{rem: WLOG |S_p|=p}
The support recovery results of \cite{gao2018fundamental} require URS in the sense of \eqref{def: URS lim} for a subsequence $S_p\subset [p]$, with $|S_p|\to\infty.$ By the previous remark, upon relabelling the triangular array $\mathcal{E}$, Theorem \ref{thm: capstone} applies in this setting with $p$ replaced by $|S_p|$, and entails rates on the convergence in \eqref{def: URS lim}. 
\end{remark}

The preceding Theorem \ref{thm: capstone} gives us an upper bound on the rate at which the convergence in \eqref{def: URS lim} takes place for a UDD Gaussian array $\mathcal{E}.$ Observe that this bound depends  crucially on the covariance structure of $\mathcal{E}$ through $N_{\mathcal{E}}(\tau)$. This dependence will be illustrated in the following examples, where the upper bound stated in \eqref{ineq: lb on the rate} is obtained for three specific covariance structures.
\begin{example}\label{ex: iid Gaussian}
\textit{The iid case and optimality of the rate bounds.}\\
Suppose that all $\epsilon_p(j)$'s are iid. Then, we can pick $\tau(p)=0$ or $\tau<1$ vanishing to 0 arbitrarily fast, and we would have that $N_{\mathcal{E}}(\tau)=1,$ because of the strict inequality in \eqref{def: Ntau}. This implies that $\alpha(p)=\log(N_{\mathcal{E}}(\tau))/\log(p)=0.$ Thus, in this case, the upper bound in \eqref{ineq: lb on the rate} becomes $1/\log(p)$. Observe that this rate matches the optimal rate in Conjecture \ref{con: optimal rate}.
\end{example}

\begin{example}\label{ex: cov k^-gamma}
\textit{Power-law covariance decay.}\\
Consider, first, the simple case where $\mathcal{E}$ comes from a stationary Gaussian time series, $\epsilon_p(\kappa)=\epsilon(\kappa)$, with auto-covariance \begin{equation}\label{def: power-law cov}
\rho(\kappa)=\cov(\epsilon(\kappa),\epsilon(0))\propto \kappa^{-\gamma},\quad \gamma>0.    
\end{equation}
Then, the classic Berman condition $\rho(\kappa)=o(1/\log(\kappa))$ holds and as shown in  the discussion after Proposition \ref{prop: on optimal rate},  the optimal rate in \eqref{def: URS lim} is $1/\log(p)$.

In this example, we will demonstrate that our result [Theorem \ref{thm: capstone}] leads to the nearly optimal rate $\log(\log(p))/\log(p)$. As in the previous remark, we see that this is in fact the optimal rate if $u_p$ in \eqref{def: URS lim} is replaced by $\sqrt{2\log(p)}.$ (See Section \ref{sec:optimal}). Note, however, that our arguments apply in greater generality and do not depend on the stationarity assumption. Indeed, assume that $\mathcal{E}$ is a general Gaussian triangular array such that (UDD$^\prime$) of \cite{gao2018fundamental} holds, i.e., \begin{equation}\label{ineq: power law cov bound}
\left|\cov(\epsilon_p(i),\epsilon_p(j))\right|\leq c\left|\pi_p(i)-\pi_p(j)\right|^{-\gamma}    
\end{equation}
for suitable permutations $\pi_p$ of $\{1,\hdots,p\}$, where $c$ does not depend on $p$. (Note that \eqref{ineq: power law cov bound} entails \eqref{def: power-law cov} for $\pi_p=id$, where $id$ is the identity permutation.) Then, one can readily show that $N_{\mathcal{E}}(\tau)=\mathcal{O}(\tau^{-1/\gamma})$, as $\tau\to0.$
Thus, \begin{equation*}
\alpha(p)=\frac{\log(N_{\mathcal{E}}(\tau))}{\log(p)}\propto\frac{\log(\tau^{-1/\gamma})}{\log(p)}=-\frac{\log(\tau)}{\gamma\log(p)}.    
\end{equation*}
Using this $\alpha(p),$ the upper bound on the rate in  Theorem \ref{thm: capstone} becomes
\begin{equation}\label{eq: alpha_p power law}
    \alpha(p)+\tau(p)+\frac{\log(\log(p))}{\log(p)}\propto -\frac{\log(\tau)}{\log(p)}+\tau(p)+\frac{1}{\log(p)}\asymp-\frac{\log(\tau)}{\log(p)}+\tau(p).
\end{equation}
This is minimized by taking $\tau(p) =1/\log(p)$ in \eqref{eq: alpha_p power law} and the upper bound on the rate becomes $$\alpha(p)+\tau(p)+\frac{1}{\log(p)}\propto \frac{\log(\log(p))}{\log(p)}+\frac{1}{\log(p)}\asymp\frac{\log(\log(p))}{\log(p)}.$$ 
Recall that in the case when ${\cal E}$ has iid components, the
optimal rate of concentration of the maxima is $1/\log(p)$ and in fact
it becomes $\log(\log(p))/\log(p)$ when one uses the normalization
$\sqrt{2\log(p)}$ in place of $u_p$.  Therefore, this example shows 
that under mild power-law type covariance decay conditions, Gaussian 
triangular arrays continue to concentrate at the nearly optimal 
rates for the iid setting.
\end{example}

\begin{example}\label{ex: cov 1/(logk)^nu}
\textit{Logarithmic covariance decay.}\\
Following suit from Example \ref{ex: cov k^-gamma}, we consider first the case where the errors come from a stationary time series with auto-covariance \begin{equation}\label{def: logarithmic cov}
\rho(\kappa)=\cov(\epsilon(\kappa),\epsilon(0))\propto \left(\log(\kappa)\right)^{-\nu},\quad\mbox{as } \kappa\to\infty,   
\end{equation}  
for some $\nu>0$. Note that for $0<\nu<1$, the Berman condition $\rho(\kappa)=o(1/\log(\kappa))$ is no longer satisfied and the results from Section \ref{sec:optimal} cannot be applied to establish the optimal rate in \eqref{def: URS lim}. Using Theorem \ref{thm: capstone}, we will see that an upper bound on this rate is $\delta_p^{\star}:=(\log(p))^{-\frac{\nu}{\nu+1}}$.

Indeed, consider the more general case where $\mathcal{E}$ is a Gaussian triangular array, such that (UDD$'$) of \cite{gao2018fundamental} holds, i.e., \begin{equation}\label{ineq: logarithmic cov bound}
\left|\cov(\epsilon_p(i),\epsilon_p(j))\right|\leq c\left(\log\left(\left|\pi_p(i)-\pi_p(j)\right|\right)\right)^{-\nu},   
\end{equation}
for suitable permutations $\pi_p$ of $\{1,\hdots,p\}$ and $c$ does not depend on $p$. Again, note that \eqref{ineq: logarithmic cov bound} implies \eqref{def: logarithmic cov} for the identity permutation. One can show that in this case $N_{\mathcal{E}}(\tau)=\mathcal{O}\left(e^{\tau^{-1/\nu}}\right)$, as $\tau\to 0$ and thus, $$\alpha(p)=\frac{\log(N_{\mathcal{E}}(\tau))}{\log(p)}\propto\frac{\log\left(e^{\tau^{-1/\nu}}\right)}{\log(p)}=\frac{1}{\tau^{1/\nu}\log(p)},\quad\mbox{as } p\to\infty.$$ 

To find the best bound on the rate in the context of \eqref{ineq: lb on the rate} we minimize \begin{equation*}
\alpha(p)+\tau(p)+\frac{1}{\log(p)}\propto \frac{1}{\tau^{1/\nu}\log(p)}+\tau+\frac{1}{\log(p)},  
\end{equation*}
with respect to $\tau.$ Considering $p$ fixed, basic calculus gives us that the r.h.s. is minimized for $\tau(p)=(\nu\log(p))^{-\frac{\nu}{\nu+1}}.$ With this choice of $\tau$ the fastest upper bound from Theorem \ref{thm: capstone} becomes \begin{equation*}
\left[\nu^{-\frac{\nu}{\nu+1}}+\nu^{-\frac{1}{\nu+1}}\right]\cdot (\log(p))^{-\frac{\nu}{\nu+1}}+\frac{1}{\log(p)}\propto (\log(p))^{-\frac{\nu}{\nu+1}}.    
\end{equation*} 

It only remains to show that the choice of $\tau$ actually allows us to pick  $N_{\mathcal{E}}(\tau)=\mathcal{O}\left(e^{\tau^{-1/\nu}}\right).$ A sufficient condition would be $p\geq \tilde c \cdot e^{\tau^{-1/\nu}}$ for a suitably chosen constant $\tilde c$ not depending on either $p$ or $\tau$. Substituting $\tau=(\nu\log(p))^{-\frac{\nu}{\nu+1}}$, we equivalently need $$p\geq\tilde c \cdot e^{(\nu\log(p))^{\frac{1}{\nu+1}}}.$$ It is readily checked, by taking logarithms in both sides, that this holds for $p$ sufficiently large and thus, the fastest upper bound for this kind of dependence structure is $(\log(p))^{-\frac{\nu}{\nu+1}}$.

Observe that as $\nu\to\infty$ this upper bound approaches 
asymptotically the optimal rate $1/\log(p)$ achieved under the 
Berman condition (see Section \ref{sec:optimal}). Our results yield, 
however,  an upper bound on the rate of concentration in 
\eqref{def: URS lim} for the case $0<\nu<1,$ where the Berman
condition does not hold.
\end{example}

\subsection{Functions of Gaussian arrays}\label{sec: functions Gaussian}

The main motivation behind the work in this section is to determine when the concentration of maxima property is preserved under transformations. Specifically, consider the triangular array \begin{equation}\label{def: Eta}
\mathcal{H}=\left\{\eta_p(j) = f(\epsilon_p(j)),\ j\in[p],\ p\in\mathbb{N}\right\},\end{equation}
where $\mathcal{E}=\left\{\epsilon_p(j),\ j\in [p], \ p\in\mathbb{N}\right\}$ is a Gaussian triangular array with standard Normal marginals.

Given that \eqref{def: URS with delta_p} holds, our goal is to find bounds on a sequence $d_p\downarrow 0$, such that \begin{equation}\label{eq: eta d_p rate}
{\mathbb P}\left(\left|\frac{\max_{j\in[p]}\eta_p(j)}{v_p}-1\right|>d_p\right)\rightarrow 0,\quad\mbox{as } p\to\infty, 
\end{equation}
where $v_p=f(u_p)$  and $u_p$ is as in \eqref{def: up}.
We first address the case of monotone non-decreasing transformations.

\begin{proposition}\label{prop: d_p I}
Asssume that $f$ is a non-decreasing differentiable and eventually strictly increasing function, with $\lim_{x\to\infty}f(x)\neq 0$ and the derivative $f'(x)$ is either eventually increasing or eventually decreasing as $x\to\infty$. If \eqref{def: URS with delta_p} holds with some $\delta_p>0$, then  \eqref{eq: eta d_p rate} holds provided that \begin{equation}\label{def: d_p star}
d_p\geq d_p^{\star}:=\frac{u_p\delta_p\max\left\{|f'(u_p(1-\delta_p)|,|f'(u_p(1+\delta_p)|\right\}}{|f(u_p)|}.    
\end{equation}
\end{proposition}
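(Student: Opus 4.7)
My plan is to reduce the claim for the transformed maxima to the concentration already assumed for $\mathcal{E}$, via a mean value theorem estimate. Since $f$ is non-decreasing, we have the pointwise identity $\max_{j\in[p]}\eta_p(j)=f(M_p)$, where $M_p:=\max_{j\in[p]}\epsilon_p(j)$. Consequently, it suffices to establish the event inclusion
$$\Bigl\{\bigl|f(M_p)/f(u_p)-1\bigr|>d_p\Bigr\}\;\subseteq\;\Bigl\{\bigl|M_p/u_p-1\bigr|>\delta_p\Bigr\}$$
for all $p$ large enough, after which \eqref{eq: eta d_p rate} follows immediately from \eqref{def: URS with delta_p}.

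To prove this inclusion I work on the ``good event'' $A_p:=\{|M_p/u_p-1|\leq\delta_p\}$, on which $M_p$ lies in the interval $I_p:=[u_p(1-\delta_p),\,u_p(1+\delta_p)]$. Since $u_p\to\infty$ and $\delta_p\to 0$, both endpoints of $I_p$ diverge to $+\infty$, so for all sufficiently large $p$ the interval $I_p$ sits entirely in the half-line where $f$ is strictly increasing and $f'$ is monotone. The mean value theorem then produces some $\xi\in I_p$ (between $M_p$ and $u_p$) with $f(M_p)-f(u_p)=f'(\xi)(M_p-u_p)$, and since $f'\geq 0$ wherever defined, monotonicity of $f'$ on $I_p$ forces the supremum of $|f'|$ to be attained at one of the endpoints of $I_p$. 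Combined with $|M_p-u_p|\leq u_p\delta_p$ on $A_p$, this yields
$$|f(M_p)-f(u_p)|\;\leq\; u_p\delta_p\cdot\max\{|f'(u_p(1-\delta_p))|,\,|f'(u_p(1+\delta_p))|\}.$$

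Dividing through by $|f(u_p)|=|v_p|$, which is bounded away from zero for large $p$ because $\lim_{x\to\infty}f(x)\neq 0$ and $f$ is eventually monotone, I obtain $|f(M_p)/v_p-1|\leq d_p^{\star}\leq d_p$ on $A_p$. This is exactly the desired event inclusion, completing the argument. The only subtle point is to verify that $I_p$ is eventually contained in the region where $f'$ is monotone so that its supremum over $I_p$ can be replaced by its endpoint values; this is automatic from $u_p(1-\delta_p)\to\infty$. Beyond this bookkeeping, the proof reduces to a one-line MVT estimate, and I do not anticipate any serious obstacle.
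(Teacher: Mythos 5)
Your proposal is correct and follows essentially the same route as the paper's proof: the identity $\max_j f(\epsilon_p(j))=f(M_p)$, the mean value theorem, and the eventual monotonicity of $f'$ to bound $|f'(\xi)|$ by its value at an endpoint of $[u_p(1-\delta_p),u_p(1+\delta_p)]$. Your phrasing as a deterministic event inclusion on the good set $A_p$ is a slightly cleaner piece of bookkeeping than the paper's introduction of the random rate $\tilde d_p$ (and it sidesteps the paper's aside about $f'(\theta_p)\neq 0$), but it is not a different argument.
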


\begin{proof}
Since $u_p\uparrow\infty$, by the monotonicity of $f$ and the fact that it is eventually strictly increasing, one can show that $f(u_p) = v_p = F_{\eta}^{\leftarrow}(1-1/p),$ for $p$ large enough. 
We start by noticing that \begin{align}\label{eq: max_eta = f(max_epsilon)}
\left|\frac{\max_{j\in [p]}\eta_p(j)}{v_p}-1\right| & = \left|\frac{\max_{j\in [p]}f(\epsilon_p(j))-f(u_p)}{f(u_p)}\right| = \left|\frac{f\left(\max_{j\in [p]}\epsilon_p(j)\right)-f(u_p)}{f(u_p)}\right|,
\end{align}
where the second equality follows by the monotonicity of $f$. 

Now recall that $f$ is differentiable. By the Mean Value Theorem, there exists a possibly random $\theta_p$ between $u_p$ and $\max_{j\in [p]}\epsilon_p(j)$, such that 
\begin{equation}\label{eq: MVT}
\left|\frac{f\left(\max_{j\in [p]}\epsilon_p(j)\right)-f(u_p)}{f(u_p)}\right|=\left|\frac{1}{f(u_p)}f'(\theta_p)\left(\max_{j\in [p]}\epsilon_p(j)-u_p\right)\right|. 
\end{equation}

Combining \eqref{eq: max_eta = f(max_epsilon)} and \eqref{eq: MVT}, we obtain
 \begin{align*}
 {\mathbb P}\left(\left|\frac{\max_{j\in[p]}\eta_p(j)}{v_p}-1\right|>d_p\right) & ={\mathbb P}\left(\left|\frac{u_p f'(\theta_p)}{f(u_p)}\right|\cdot\left|\frac{\max_{j\in[p]}\epsilon_p(j)}{u_p}-1\right|>d_p\right)\\ & =
 {\mathbb P}\left(\left|\frac{\max_{j\in[p]}\epsilon_p(j)}{u_p}-1\right|>\frac{d_p |f(u_p)|}{u_p|f'(\theta_p)|}\right),
 \end{align*}
 where the second equality follows from the fact that $f'(\theta_p)\neq0$ over the event of interest, since $d_p>0$.
This shows that for any non-negative sequence $\delta_p$ vanishing to 0, such that \eqref{def: URS with delta_p} holds,
we have that \begin{equation}\label{conv: concentration tilde d_p}
{\mathbb P}\left(\left|\frac{\max_{j\in [p]}\eta_p(j)}{v_p}-1\right|>\tilde d_p\right)\to 0, \quad\mbox{as } p\to\infty,
\end{equation}
where \begin{equation}\label{def: tilde d_p}
\tilde d_p:=\frac{u_p\delta_p|f'(\theta_p)|}{|f(u_p)|.}    
\end{equation}

Now, we know by \eqref{def: URS with delta_p} that $$\left|\theta_p-u_p\right|\leq\left|\max_{j\in [p]}\epsilon_p(j)-u_p\right|\leq u_p\delta_p$$ with probability going to 1, as $p\to\infty.$ This implies that  $${\mathbb P}\left(u_p(1-\delta_p)\leq\theta_p\leq u_p(1+\delta_p)\right)\to1,\quad\mbox{as } p\to\infty,$$
In turn, by the eventual monotonicity of $f'$, the last convergence implies that 
\begin{equation*}
{\mathbb P}\left(\left|f'(\theta_p)\right|\leq\max\left\{\left|f'(u_p(1-\delta_p))\right|,\left|f'(u_p(1+\delta_p))\right|\right\}\right)\to 1,\quad\mbox{as } p\to\infty,  
\end{equation*}
and equivalently 
\begin{equation}\label{conv: d_p tilde d_p star} 
{\mathbb P}\left(\tilde d_p\leq d_p^{\star}\right)\to1,\quad\mbox{as } p\to\infty.
\end{equation}
By \eqref{def: tilde d_p} and \eqref{conv: d_p tilde d_p star}
we conclude that \eqref{conv: concentration tilde d_p} holds with $\tilde d_p$ substituted by $d_p^{\star}$. This shows that $d_p^{\star}$ is an upper bound of the optimal rate of concentration, i.e., \eqref{def: d_p star} implies \eqref{eq: eta d_p rate}. \qed
\end{proof}

A typical and very important case where Proposition \ref{prop: d_p I} applies is when the array $\mathcal{E}$ undergoes an exponential transformation, illustrated in the following example. 

\begin{example}\label{ex: lognormal}
Let $\mathcal{E}$ be as in Proposition \ref{prop: d_p I} and consider \begin{equation}\label{ex: lognormal: eta_array}
\mathcal{H}_{\mathcal{E}}=\left\{\eta_p(j):=e^{\epsilon_p(j)},\ j\in[p],\ p\in\mathbb{N}\right\},    
\end{equation}
which is a triangular array with lognormal marginal distributions. This is sometimes referred to as the multivariate lognormal model \citep{halliwell2015lognormal}. Let $\delta_p$ be such that \eqref{def: URS with delta_p} holds. Then, an immediate application of Proposition \ref{prop: d_p I} shows that as long as $u_p\delta_p\to0,$ an upper bound on the rate of convergence in \eqref{eq: eta d_p rate} is $$d_p^{\star}=u_p\delta_pe^{u_p\delta_p}\sim u_p\delta_p\sim\delta_p\sqrt{2\log(p)}.$$ That is, lognormal arrays can have relatively stable maxima, provided that the underlying maxima of the Gaussian array concentrate at a rate $\delta_p=o\left(1/\sqrt{\log(p)}\right).$   
\end{example}

Popular models like the ones with $\chi_1^2$ marginals can be obtained from Proposition \ref{prop: d_p I} with the monotone transformation $f(x):=F^{-1}\left(\Phi(x)\right)$, where $F$ is the cdf of the desired distribution. The classic multivariate $\chi_1^2$- models, however, are obtained by squaring the elements of the Gaussian array, i.e., via the non-monotone transformation $f(x)=x^2.$ Such models are addressed in the next result. 

\begin{corollary}\label{cor: d_p II}
Let all the assumptions of Proposition \ref{prop: d_p I} hold and let $d_p^{\star}$ be defined as before. Assume now that $f$ is an even ($f(x)=f(-x)$) differentiable and eventually strictly increasing function, with $\lim_{x\to\infty}f(x)\neq0$. Assume also that $f$ is monotone non-decreasing on $(0,\infty)$. Then, the conclusion \eqref{def: d_p star} still holds.
\end{corollary}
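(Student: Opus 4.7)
\medskip

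The plan is to reduce Corollary \ref{cor: d_p II} to the monotone setting of Proposition \ref{prop: d_p I} by exploiting the evenness of $f$. The key observation is that for any $x\in\mathbb{R}$, $f(x)=f(|x|)$, and since $f$ is monotone non-decreasing on $[0,\infty)$,
\[
\max_{j\in[p]}\eta_p(j)=\max_{j\in[p]}f(\epsilon_p(j))=\max_{j\in[p]}f(|\epsilon_p(j)|)=f\!\left(\max_{j\in[p]}|\epsilon_p(j)|\right).
\]
Also, by evenness, $v_p=f(u_p)=f(|u_p|)$. So once one shows that $\max_{j\in[p]}|\epsilon_p(j)|$ concentrates at $u_p$ with the same rate $\delta_p$ as $\max_{j\in[p]}\epsilon_p(j)$, the exact mean-value-theorem argument used in the proof of Proposition \ref{prop: d_p I} (applied on the eventually monotone branch of $f$ on $[0,\infty)$) goes through verbatim, because $u_p\to\infty$ and the relevant arguments $u_p(1\pm\delta_p)$ both lie in the region where $f$ is strictly increasing and $f'$ is eventually monotone.

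To establish the concentration of $\max_{j\in[p]}|\epsilon_p(j)|$, I would write
\[
\max_{j\in[p]}|\epsilon_p(j)|=\max\left\{\max_{j\in[p]}\epsilon_p(j),\ \max_{j\in[p]}(-\epsilon_p(j))\right\}.
\]
The hypothesis gives concentration of the first term at rate $\delta_p$. For the second, observe that the array $-\mathcal{E}=\{-\epsilon_p(j)\}$ is again a Gaussian triangular array with standard Normal marginals, and its covariance function is identical to that of $\mathcal{E}$. Consequently $N_{-\mathcal{E}}(\tau)\leq N_{\mathcal{E}}(\tau)$ for every $\tau>0$, so Theorem \ref{thm: capstone} applied to $-\mathcal{E}$ yields the same rate bound $\delta_p$. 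A union bound then gives
\[
\mathbb{P}\!\left(\left|\frac{\max_{j\in[p]}|\epsilon_p(j)|}{u_p}-1\right|>\delta_p\right)\to 0,\quad p\to\infty.
\]

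With this in hand, I would mimic the proof of Proposition \ref{prop: d_p I}: by the Mean Value Theorem there exists a (possibly random) $\theta_p$ lying between $u_p$ and $\max_{j\in[p]}|\epsilon_p(j)|$ such that
\[
\left|\frac{\max_{j\in[p]}\eta_p(j)}{v_p}-1\right|=\left|\frac{u_p f'(\theta_p)}{f(u_p)}\right|\cdot\left|\frac{\max_{j\in[p]}|\epsilon_p(j)|}{u_p}-1\right|.
\]
On the event of concentration, $\theta_p\in[u_p(1-\delta_p),u_p(1+\delta_p)]$ with probability tending to one; by the eventual monotonicity of $f'$ this yields $|f'(\theta_p)|\leq\max\{|f'(u_p(1-\delta_p))|,|f'(u_p(1+\delta_p))|\}$ with high probability, so the bound is controlled by $d_p^\star$ exactly as before.

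I expect the only subtlety to be the verification that the bound $N_{-\mathcal{E}}(\tau)\leq N_{\mathcal{E}}(\tau)$ legitimately transfers the rate $\delta_p$ from $\mathcal{E}$ to $-\mathcal{E}$: strictly speaking the assumption of the corollary concerns $\delta_p$ for which \eqref{def: URS with delta_p} holds for $\mathcal{E}$, and one should make clear that any admissible $\delta_p$ arising from Theorem \ref{thm: capstone} depends only on $N_{\mathcal{E}}(\tau)$, hence is automatically admissible for $-\mathcal{E}$. Beyond this observation the argument is essentially a repetition of the proof of Proposition \ref{prop: d_p I}.
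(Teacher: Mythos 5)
Your argument is correct and follows essentially the same route as the paper: reduce to the monotone case of Proposition \ref{prop: d_p I} by exploiting evenness together with the fact that the sign-flipped array $-\mathcal{E}$ has the same law as $\mathcal{E}$ (the paper bounds the probability by a union over the two candidates $f(\max_{j}\epsilon_p(j))$ and $f(\min_{j}\epsilon_p(j))$ and cites Proposition \ref{prop: d_p I} twice, rather than first proving concentration of $\max_{j}|\epsilon_p(j)|$ and rerunning the mean-value argument, but this is only a difference in bookkeeping). The subtlety you flag at the end is not an issue: since $\{-\epsilon_p(j)\}\stackrel{d}{=}\{\epsilon_p(j)\}$ (both centered Gaussian with identical covariance), we have $\max_{j}(-\epsilon_p(j))\stackrel{d}{=}\max_{j}\epsilon_p(j)$, so any $\delta_p$ for which \eqref{def: URS with delta_p} holds for $\mathcal{E}$ automatically works for $-\mathcal{E}$, with no appeal to Theorem \ref{thm: capstone} or to $N_{-\mathcal{E}}(\tau)$ required.
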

\begin{proof}
We start by observing that   
\begin{align}\label{eq: max_eta = f(max_epsilon) II}
 & {\mathbb P}\left(\left|\frac{\max_{j\in [p]}\eta_p(j)}{f(u_p)} -1\right|>d_p\right) = {\mathbb P}\left(\left|\frac{\max_{j\in [p]}f(\epsilon_p(j))-f(u_p)}{f(u_p)}\right|>d_p\right) \nonumber \\ & \leq {\mathbb P}\left(\left|\frac{f(\min_{j\in [p]}\epsilon_p(j))-f(u_p)}{f(u_p)}\right|>d_p\right)+{\mathbb P}\left(\left|\frac{f(\max_{j\in [p]}\epsilon_p(j))-f(u_p)}{f(u_p)}\right|>d_p\right),
\end{align}
because the symmetry and monotonicity of $f$ on $(0,\infty)$ imply that $\max_{j\in[p]}f(\epsilon_p(j))$ equals either $f\left(\max_{j\in[p]}\epsilon_p(j)\right)$ or $f\left(\min_{j\in[p]}\epsilon_p(j)\right).$

By Proposition \ref{prop: d_p I} we can readily obtain that for $d_p\geq d_p^{\star}$ the second term of \eqref{eq: max_eta = f(max_epsilon) II} converges to 0. Now, we handle the first term of \eqref{eq: max_eta = f(max_epsilon) II}. By the symmetry of $f$ we have that $$f(\min_{j\in[p]}\epsilon_p(j))=f(-\min_{j\in [p]}\epsilon_p(j))=f(\max_{j\in [p]}(-\epsilon_p(j)).$$ 
Notice that by verifying the equality of the covariance structures, we have $$\left\{-\epsilon_p(j),\ j\in[p]\right\}\stackrel{d}{=}\left\{\epsilon_p(j),\ j\in[p]\right\}.$$
Hence $\max_{j\in[p]}(-\epsilon_p(j))\stackrel{d}{=}\max_{j\in[p]}\epsilon_p(j),$ and again by Proposition \ref{prop: d_p I} we get that for $d_p\geq d_p^{\star}$ the first term of \eqref{eq: max_eta = f(max_epsilon) II} also converges to 0. This completes the proof. \qed
\end{proof}

Using Corollary \ref{cor: d_p II} we can now treat the  multivariate $\chi^2$ model introduced in \cite{doi:10.1080/03610929708832000}.
\begin{example}\label{ex: chi-square}
Let $\mathcal{E}$ be as in Proposition \ref{prop: d_p I} and consider $$\mathcal{H}_{\mathcal{E}}=\left\{\eta_p(j):=\epsilon_p^2(j),\ j\in[p],\ p\in\mathbb{N}\right\},$$ a triangular array with $\chi_1^2$ marginal distributions. Let $\delta_p$ be as in \eqref{def: URS with delta_p}.  Then, a simple application of Corollary \ref{cor: d_p II} implies \eqref{eq: eta d_p rate}, provided   $$d_p\geq d_p^{\star}=2\delta_p(1+\delta_p)\sim2\delta_p.$$ In contrast to Example \ref{ex: lognormal}, taking squares does not lead to a slower rate of convergence. Indeed, in Example \ref{ex: lognormal} our estimate of the rate is slowed down by a factor of $\sqrt{\log(p)}$, while in the $\chi^2$ case it remains $\delta_p$.
\end{example}

We shall now see that the rate of convergence is not slowed down by any power transformation $x\mapsto x^{\lambda},$ for any $\lambda>0.$

\begin{example}\label{ex: powers}\textit{Power-Law Transformations.}\\
Let once again $\mathcal{E}$ be as in Proposition \ref{prop: d_p I} and consider the power transformations $f(x) = x^{\lambda},\ \lambda>0.$ In the cases where $\lambda\not\in\mathbb{N},$ we use the functions $f_1^{\lambda}(x)= |x|^{\lambda}$ or $f_2^{\lambda}(x) = x^{<\lambda>}=\rm{sign}(x)\cdot |x|^{\lambda}.$ Note that differentiability at 0 is not needed in any of the proofs, so using $f_1^{\lambda}$ does not violate any of the assumptions. Let also $\delta_p$ be as in \eqref{def: URS with delta_p}, i.e., a rate sequence for the convergence in \eqref{def: URS lim}.  Then, a suitable application of Proposition \ref{prop: d_p I} or Corollary \ref{cor: d_p II}, shows that an upper bound on the rate of convergence in \eqref{eq: eta d_p rate} is $$d_p^{\star}=\lambda\delta_p(1+\delta_p)^{\lambda-1}\sim \lambda\delta_p\quad\quad \text{or} \quad\quad d_p^{\star}=\lambda\delta_p(1-\delta_p)^{\lambda-1}\sim \lambda\delta_p.$$

In view of Examples \ref{ex: iid Gaussian}, \ref{ex: cov k^-gamma} and \ref{ex: cov 1/(logk)^nu}, we now show how the rate $d_p^{\star}\sim\lambda\delta_p$ is affected under different correlation structures of the underlying Gaussian array $\mathcal{E}.$ Recall that in the iid case of Example \ref{ex: iid Gaussian} we have that the optimal rate is $\delta_p \gg \delta_p^{{\rm opt}} = 1/\log(p).$ This implies that an upper bound on the rate of concentration is $$d_p^{\star}\sim\lambda\delta_p\gg \frac{\lambda}{\log(p)}.$$   
Moreover, for the power-law covariance decay covariance structure (Example \ref{ex: cov k^-gamma}), we observe that compared to the iid case, the rate of concentration $\delta_p$ is scaled by a factor of $\log(\log(p))$. Namely, for the power-law transformations we get that the upper bound is 
$$
d_p^{\star}\sim\lambda\delta_p\sim \frac{\lambda \log(\log(p))}{\log(p)}.
$$

Finally, we examine the logarithmic covariance decay (Example \ref{ex: cov 1/(logk)^nu}). Remember that in this case the rate we have for $\mathcal{E}$ is $\delta_p =\left(\log(p)\right)^{-\frac{\nu}{\nu+1}}.$ This implies that the upper bound of the rate of concentration for the power-law transformations is  
$$d_p^{\star}\sim\lambda\delta_p\sim \frac{\lambda }{\left(\log(p)\right)^{\frac{\nu}{\nu+1}}}. $$
Observe that in this case, $d_p^{\star}$ is a valid upper bound aside from the value of $\nu.$ We will see in the following Example \ref{ex: exponential powers}, that the same is not true for the exponential power-law transformations. 
\end{example}

In the last example of this section, we explore exponential power transformations and how they affect our bounds on the rate of convergence.  

\begin{example}\label{ex: exponential powers}\textit{Exponential Power-Law Transformations.}\\
Let $\mathcal{E}$ be as in Proposition \ref{prop: d_p I} and consider the exponential power transformations $f(x) = e^{x^{\lambda}},\ \lambda>0,\ \lambda\neq1.$ (Note that $\lambda=1$ is the lognormal case which we have alredy seen in Example \ref{ex: lognormal}). In the cases where $\lambda\not\in\mathbb{N},$ we use the functions $f_1^{\lambda}(x)= e^{|x|^{\lambda}}$ or $f_2^{\lambda}(x) = e^{x^{<\lambda>}}=e^{\rm{sign}(x)\cdot |x|^{\lambda}}.$ Similarly to Example \ref{ex: powers}, differentiability at 0 is not needed in any of the proofs, so using $f_1^{\lambda}$ does not violate any of the assumptions. Let also $\delta_p$ be as in \eqref{def: URS with delta_p}.  Then, suitable applications of Proposition \ref{prop: d_p I} or Corollary \ref{cor: d_p II} show that as long as $u_p^{\lambda}\delta_p\to 0,$ an upper bound on the rate of convergence in \eqref{eq: eta d_p rate} is $$d_p^{\star}=\lambda u_p^{\lambda}\delta_p(1+\delta_p)^{\lambda-1}e^{u_p^{\lambda}\left[(1+\delta_p)^{\lambda}-1\right]},\quad \mbox{if }\lambda\geq 1$$
and $$d_p^{\star}=\lambda u_p^{\lambda}\delta_p(1-\delta_p)^{\lambda-1}e^{u_p^{\lambda}\left[(1-\delta_p)^{\lambda}-1\right]},\quad\mbox{if } 0<\lambda<1.$$
In both cases we have $d_p^{\star}\sim\lambda\delta_p(2\log(p))^{\lambda/2},$ as $p\to\infty$.
As a generalization of the lognormal case ($\lambda=1$), we see that the iid\ rate $\delta_p$ is scaled by a factor of $\left(\sqrt{\log(p)}\right)^{\lambda}$. This means that this kind of arrays would still have relatively stable maxima, provided that the underlying maxima of the Gaussian array concentrate at a rate $\delta_p=o\left(1/(\log(p))^{\lambda/2}\right).$

At this point, we examine how the rate $d_p^{\star}\sim \lambda\delta_p(2\log(p))^{\lambda/2}$ adjusts under the varying covariance structures of $\mathcal{E}$ in Examples \ref{ex: iid Gaussian}, \ref{ex: cov k^-gamma} and \ref{ex: cov 1/(logk)^nu}. In an analogous manner to Example \ref{ex: powers}, we get that for the iid case, an upper bound on the rate of concentration is $$d_p^{\star}\sim\lambda\delta_p u_p^{\lambda}\gg 2^{\frac{\lambda}{2}}\lambda \left(\log(p)\right)^{\frac{\lambda}{2}-1},$$
while for the power-law covariance decay covariance structure we obtain 
$$
d_p^{\star}\sim\lambda\delta_p u_p^{\lambda}\sim 2^{\frac{\lambda}{2}}\lambda \left(\log(p)\right)^{\frac{\lambda}{2}-1}\log(\log(p)).$$

In the previous two instances we notice that the covariance structure does not impose any restrictions on the values of $\lambda$, in order to guarantee concentration of maxima for the transformed triangular array.
This is not the case for the logarithmic covariance decay, since the upper bound becomes $$d_p^{\star}\sim\lambda\delta_p u_p^{\lambda}\sim 2^{\frac{\lambda}{2}}\lambda \left(\log(p)\right)^{\frac{\lambda}{2}-\frac{\nu}{\nu+1}}.$$

The aforementioned $d_p^{\star}$ is a a sensible upper bound for the rate of concentration in this case, only if $d_p^{\star}\to0,$ as $p\to\infty$. This is so, when $\nu>\frac{\lambda}{2+\lambda}$. Thus, our results imply that in the lognormal case ($\lambda=1)$, $\nu>\frac{1}{3}$ guarantees that the transformed array is relatively stable.  
\end{example}

\begin{remark}
In Conjecture \ref{con: optimal rate}, we posit that the fastest rate of convergence for a UDD Gaussian array is bounded above by $1/\log(p)$. Nevertheless, from Example \ref{ex: iid Gaussian} for the iid case, our bound in \eqref{ineq: lb on the rate} is again $1/\log(p).$ Since $u_p\sim\sqrt{2\log(p)},$ we see that we can get an upper bound on the rate of $f(x)=e^{x^{\lambda}}$ only for $0<\lambda<2.$ The range $\lambda\in(0,2)$ is also natural, because one can show that the transformation $f(x)=e^{x^{\lambda}},$ for $\lambda\geq 2$, leads to heavy power-law distributed variables $\eta_p(j).$ Heavy-tailed random variables no longer have relatively stable maxima, which makes the question about the rate of concentration of maxima meaningless. 
\end{remark}

We will end this section with a corollary, readily obtained by the discussion in the end of Example \ref{ex: exponential powers}. 
\begin{corollary}\label{cor: log-normal}
Suppose that $\mathcal{H}:=\left\{\eta_p(j),\ j\in[p],\ p\in\mathbb{N}\right\}$ is a multivariate log-normal array as in \eqref{ex: lognormal: eta_array}. Suppose that \begin{equation}\label{cor: log-normal: cov eta inequality}
\left|{\rm Cov}\left(\eta_p(j),\eta_p(k)\right)\right|\leq c\cdot\frac{1}{\left(\log(|\pi_p(j)-\pi_p(k)|)\right)^{\nu}},    
\end{equation}
for some $\nu>1/3,$ permutations $\pi_p$ of $\left\{1,\hdots,p\right\}$ and a constant $c$ independent of $p$. Then the array $\mathcal{H}$ is URS. 
\end{corollary}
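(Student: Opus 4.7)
\medskip
\noindent
\textbf{Proof plan.} The strategy is to reduce to the Gaussian array $\mathcal{E}=\{\epsilon_p(j)\}$ underlying $\mathcal{H}$ by translating the covariance hypothesis, and then to concatenate Example \ref{ex: cov 1/(logk)^nu} (which gives the rate of concentration for Gaussian arrays with logarithmic covariance decay) with the lognormal instance ($\lambda=1$) of Example \ref{ex: exponential powers} (which transfers the rate through the map $f(x)=e^{x}$).

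\medskip
\noindent
First I would exploit the explicit log-normal covariance identity: if $X,Y$ are jointly standard Normal with correlation $\rho$, a direct moment-generating-function computation gives
$$
{\rm Cov}(e^{X},e^{Y}) \;=\; e\bigl(e^{\rho}-1\bigr).
$$
Writing $\rho_p(j,k) := {\rm Cov}(\epsilon_p(j),\epsilon_p(k))$, the hypothesis \eqref{cor: log-normal: cov eta inequality} then reads
$|e^{\rho_p(j,k)}-1| \le (c/e)/(\log|\pi_p(j)-\pi_p(k)|)^{\nu}$, which forces $\rho_p(j,k)\to 0$ uniformly as $|\pi_p(j)-\pi_p(k)|\to\infty$. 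Using the linearization $e^{\rho}-1\sim\rho$ in this regime, and absorbing the (bounded number of) close-separation contributions into a larger multiplicative constant, I obtain
$$
|{\rm Cov}(\epsilon_p(j),\epsilon_p(k))| \;\leq\; \frac{c'}{(\log|\pi_p(j)-\pi_p(k)|)^{\nu}}
$$
with $c'$ independent of $p$. In other words, $\mathcal{E}$ satisfies hypothesis \eqref{ineq: logarithmic cov bound} of Example \ref{ex: cov 1/(logk)^nu} with the same exponent $\nu$.

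\medskip
\noindent
By Remark \ref{rem: N_epsilon tau uniform bound}, $N_{\mathcal{E}}(\tau)$ is invariant under row-relabeling, so I may assume without loss of generality that each $\pi_p$ is the identity. Example \ref{ex: cov 1/(logk)^nu} then furnishes the concentration rate $\delta_p\asymp(\log p)^{-\nu/(\nu+1)}$ for the maxima of $\mathcal{E}$. Applying Proposition \ref{prop: d_p I} with $f(x)=e^{x}$, i.e.\ the $\lambda=1$ instance of Example \ref{ex: exponential powers} (the side condition $u_p\delta_p\to 0$ holds for any $\nu>0$ since $u_p\sim\sqrt{2\log p}$), I obtain an upper bound on the rate of concentration of the maxima of $\mathcal{H}$ of the form
$$
d_p^{\star} \;\sim\; \delta_p\sqrt{2\log p} \;\asymp\; (\log p)^{\,1/2 - \nu/(\nu+1)}.
$$

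\medskip
\noindent
Finally, URS of $\mathcal{H}$ follows as soon as $d_p^{\star}\to 0$, and Example \ref{ex: exponential powers} identifies precisely the range $\nu>1/3$ for the lognormal case under which this vanishing occurs, which is the hypothesis of the corollary. The only step requiring genuine care is the covariance translation in the first paragraph; the crux there is that the assumed logarithmic decay of the $\eta_p$-covariances forces $\rho_p(j,k)$ into the linearization regime of $e^{\rho}-1\sim\rho$ uniformly in $(p,j,k)$ for well-separated indices, so that the exponent $\nu$ is preserved when passing from $\mathcal{H}$ to $\mathcal{E}$. Everything after that is bookkeeping: a direct invocation of the two examples already established in the paper.
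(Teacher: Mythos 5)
Your proposal follows the paper's own route almost exactly: the moment-generating-function identity ${\rm Cov}(e^{\epsilon_p(j)},e^{\epsilon_p(k)})=e\,(e^{\rho_p(j,k)}-1)$, the transfer of the logarithmic decay from $\mathcal{H}$ to the underlying Gaussian array $\mathcal{E}$, and then the concatenation of Example \ref{ex: cov 1/(logk)^nu} with the $\lambda=1$ case of Example \ref{ex: exponential powers} via Proposition \ref{prop: d_p I}. The only (cosmetic) difference is in the covariance transfer: the paper uses the single global inequality $|x|\leq e|e^x-1|$ on $[-1,1]$, valid since $|\rho_p(j,k)|\leq 1$, which handles all pairs at once; you instead linearize $e^{\rho}-1\sim\rho$ for well-separated pairs and absorb the close pairs into an enlarged constant. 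Both versions are fine and preserve the exponent $\nu$.

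There is, however, a concrete arithmetic problem in your final step, which your own displayed formula makes visible. You correctly arrive at $d_p^{\star}\asymp(\log p)^{1/2-\nu/(\nu+1)}$, but this exponent is negative precisely when $\nu/(\nu+1)>1/2$, i.e.\ when $\nu>1$ --- not when $\nu>1/3$. Likewise the parenthetical claim that the side condition $u_p\delta_p\to0$ ``holds for any $\nu>0$'' is false: with $\delta_p\asymp(\log p)^{-\nu/(\nu+1)}$ one has $u_p\delta_p\asymp(\log p)^{1/2-\nu/(\nu+1)}$, which is the same quantity as $d_p^{\star}$ in the lognormal case, so the side condition and the conclusion are one and the same statement and both require $\nu>1$. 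The threshold $\nu>1/3$ you quote comes from the condition $\nu>\lambda/(2+\lambda)$ stated at the end of Example \ref{ex: exponential powers}, but solving $\lambda/2-\nu/(\nu+1)<0$ actually gives $\nu>\lambda/(2-\lambda)$, which equals $1$ at $\lambda=1$ (at $\nu=1/3$, $\lambda=1$ the exponent is $1/2-1/4=1/4>0$ and $d_p^{\star}\to\infty$). So as written, your argument --- and the paper's proof, which you are faithfully reproducing --- only establishes that $\mathcal{H}$ is URS for $\nu>1$; for $1/3<\nu\leq1$ the rate bound $d_p^{\star}$ does not vanish and the chain of reasoning yields nothing. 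Closing this gap would require either a sharper concentration rate for $\mathcal{E}$ under logarithmic covariance decay than $(\log p)^{-\nu/(\nu+1)}$, or an argument for the lognormal array that does not pass through the rate bound $d_p^{\star}\sim u_p\delta_p$.
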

\begin{proof}
Let $\mathcal{E}=\left\{\epsilon_p(j),\ j\in[p],\ p\in\mathbb{N
}\right\}$ be the underlying Gaussian array. Then, we have that $\eta_p(j)=e^{\epsilon_p(j)}$ for every $j\in[p]$. Thus, \begin{align}\label{cor: log-normal: Cov eta as Cov epsilon}
{\rm Cov}(\eta_p(j),\eta_p(k))&={\rm Cov\left(e^{\epsilon_p(j)},e^{\epsilon_p(k)}\right)}\nonumber\\
&=\E\left(e^{\epsilon_p(j)+\epsilon_p(k)}\right)-\E\left(e^{\epsilon_p(j)}\right)\E\left(e^{\epsilon_p(k)}\right).
\end{align}
Recall that the moment generating function for a Normal random variable  $X\sim N(\mu,\sigma^2)$ is $M(t)=\E\left(e^{tX}\right)=e^{\mu t+\sigma^2t^2/2}$. Since $\epsilon_p(i)$ follow the standard Normal distribution, we have $\epsilon_p(j)+\epsilon_p(k)\sim N(0,2+2{\rm Cov}(\epsilon_p(j),\epsilon_p(k)))$, and hence \eqref{cor: log-normal: Cov eta as Cov epsilon} becomes \begin{equation}\label{cor: log-normal: Cov eta as Cov epsilon final} 
{\rm Cov}(\eta_p(j),\eta_p(k))= e\cdot\left(e^{{\rm Cov}(\epsilon_p(j),\epsilon_p(k))}-1\right).    
\end{equation}
In turn, \eqref{cor: log-normal: Cov eta as Cov epsilon final}
along with \eqref{cor: log-normal: cov eta inequality} implies that
\begin{equation}\label{cor: log-normal: penultimate step}
\left|e\cdot\left(e^{{\rm Cov}(\epsilon_p(j),\epsilon_p(k))}-1\right)\right| \leq \frac{c}{e}\cdot \frac{1}{\left(\log(|\pi_p(j)-\pi_p(k)|)\right)^{\nu}}.    
\end{equation}
Using the inequality $|x|\leq e|e^x-1|,\ x\in[-1,1]$ in \eqref{cor: log-normal: penultimate step}, since $\left|{\rm Cov}(\epsilon_p(j),\epsilon_p(k))\right|\leq 1$,
we finally obtain that 
$$\left|{\rm Cov}(\epsilon_p(j),\epsilon_p(k))\right|\leq \frac{c}{e}\cdot \frac{1}{\left(\log(|\pi_p(j)-\pi_p(k)|)\right)^{\nu}}.$$
The last relation implies that $\mathcal{E}$ has a logarithmic covariance decay covariance structure (see Example \ref{ex: cov 1/(logk)^nu}). Combined with the discussion in the end of Example \ref{ex: exponential powers}, the proof is complete. \qed
\end{proof}

\section{Technical proofs} \label{sec:proofs}
In this section we present the proof of the capstone Theorem \ref{thm: capstone}. Recall that we desire to find an upper bound on the rate of positive vanishing sequences $\delta_p$, such that 
$${\mathbb P}\left(\left|\frac{\max_{i\in[p]}\epsilon_p(i)}{u_p}-1\right|>\delta_p\right)\to 0, \quad\mbox{as } p\to\infty.$$ 
To this end, let \begin{equation}\label{def: xip}
\xi_p:=\frac{1}{u_p}\max_{i\in[p]}\epsilon_p(i),    
\end{equation}
where $\mathcal{E}=\left\{\epsilon_p(i),\, i\in[p]\right\}$ is a URS Gaussian array with standard Normal marginals.
Observe that \begin{align}
{\mathbb P}(|\xi_p-1|>\delta_p) & = {\mathbb P}(\xi_p>1+\delta_p) + {\mathbb P}(\xi_p<1-\delta_p)\nonumber\\\label{def: I and II}
                        & =: {\rm I}(\delta_p) + {\rm II}(\delta_p). 
\end{align} 
Thus, to obtain the desired rate we need to recover a bound on the rate of ${\rm I}(\delta_p)$ and ${\rm II}(\delta_p)$. Note that in our endeavor to secure upper bound on the term ${\rm II}(\delta_p)$ we will use the expectation of $\xi_p.$ The integrability of $\xi_p$ is ensured by Appendix A.2 of \cite{chatterjee2014superconcentration}, or \cite{pickands1968moment} in conjunction with \eqref{eq: URS S_p}.\\
 
\textbf{Term ${\rm I}(\delta_p)$.} In the following proposition, we find an upper bound on the rate of $\delta_p$ in I$(\delta_p)$ of \eqref{def: I and II}. Interestingly, the following result does not involve the dependence structure of the array $\mathcal{E}$.

\begin{proposition}\label{prop: UpperBound}
Let $\mathcal{E}=\{\epsilon_p(i),\ i\in[p]\}$ be an arbitrary Gaussian triangular array, where the marginal distributions are standard Normal and let $\xi_p$ be defined as in \eqref{def: xip}. If $\delta_p\to0$ is a positive sequence such that  \begin{equation}\label{assumption delta_p}
\delta_p\gg\frac{1}{\log(p)}   
\end{equation} then, regardless of the dependence structure of $\mathcal{E},$ we have  
\begin{equation}\label{e:prop:UpperBound}
 \lim_{p\to\infty}\left(\delta_p^{-1}\E (\xi_p-1)_+\right) =0,
\end{equation}
and consequently
${\mathbb P} ( \xi_p > 1 + \delta_p ) \to 0,$ as $p\to\infty$.
\end{proposition}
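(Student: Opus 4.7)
The plan is to bound $\mathbb{E}(\xi_p-1)_+$ by a quantity of order $1/u_p^2 \asymp 1/\log(p)$, using only the union bound on the tail of the maximum. Once that is done, Markov's inequality immediately yields
\[
\mathbb{P}(\xi_p > 1+\delta_p) \;=\; \mathbb{P}((\xi_p-1)_+ > \delta_p) \;\le\; \frac{\mathbb{E}(\xi_p-1)_+}{\delta_p},
\]
which together with the hypothesis $\delta_p \gg 1/\log(p)$ gives both \eqref{e:prop:UpperBound} and the conclusion $\mathbb{P}(\xi_p > 1+\delta_p) \to 0$.

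The central step is to rewrite $\mathbb{E}(\xi_p-1)_+$ as an integral of tail probabilities and then ignore dependence by a simple union bound:
\[
\mathbb{E}(\xi_p-1)_+ \;=\; \int_0^\infty \mathbb{P}\!\left(\max_{i \in [p]} \epsilon_p(i) > u_p(1+t)\right) dt \;\le\; p\int_0^\infty \overline{\Phi}(u_p(1+t))\, dt.
\]
This is the key observation that makes the bound \emph{independent} of the correlation structure of $\mathcal{E}$. Substituting $s = u_p(1+t)$ reduces the right-hand side to $(p/u_p)\int_{u_p}^\infty \overline{\Phi}(s)\, ds$.

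Next I would estimate the tail integral using the crude Mills-type bound $\overline{\Phi}(s) \le \phi(s)/s \le \phi(s)/u_p$ for $s \ge u_p$, giving
\[
\int_{u_p}^\infty \overline{\Phi}(s)\, ds \;\le\; \frac{1}{u_p}\int_{u_p}^\infty \phi(s)\, ds \;=\; \frac{\overline{\Phi}(u_p)}{u_p} \;=\; \frac{1}{p u_p},
\]
where the last equality uses the definition $u_p = \Phi^\leftarrow(1-1/p)$. Combining with the previous display,
\[
\mathbb{E}(\xi_p-1)_+ \;\le\; \frac{p}{u_p} \cdot \frac{1}{p u_p} \;=\; \frac{1}{u_p^2}.
\]
Since $u_p \sim \sqrt{2\log(p)}$ by Lemma \ref{l:ap-bp-constants}, this is $O(1/\log(p))$.

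Under the hypothesis $\delta_p \gg 1/\log(p)$, we therefore get $\delta_p^{-1}\mathbb{E}(\xi_p-1)_+ = O(1/(\delta_p \log(p))) \to 0$, which is \eqref{e:prop:UpperBound}, and the final claim follows from Markov's inequality as indicated above. There is no real technical obstacle here beyond getting the Mills-ratio constants right; the conceptual point worth highlighting is that the union bound completely erases the covariance structure, which is exactly why the statement holds for \emph{any} Gaussian array, and also why it gives the ``universal'' upper tail rate $1/\log(p)$ appearing in Conjecture \ref{con: optimal rate}.
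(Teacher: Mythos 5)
Your proof is correct and follows essentially the same route as the paper: the integral representation of $\E(\xi_p-1)_+$ as $\int_0^\infty \P(\xi_p>1+t)\,dt$, the union bound (which, as you note, is what makes the result dependence-free), and the Mills-ratio estimate combined with $p\overline\Phi(u_p)=1$. The only difference is one of execution rather than approach: the paper rescales by $\delta_p$ first and invokes dominated convergence against a bound of the form $B_p e^{-u_p^2\delta_p x}$, whereas you integrate the Mills bound in closed form to obtain the explicit estimate $\E(\xi_p-1)_+\le u_p^{-2}$, which is a slightly cleaner and more quantitative way to finish.
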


\noindent 
We need the following simple bound for the Mill's ratio.
\begin{lemma}\label{l:Mills_ratio} For all $u>0$, we have
$$
1- \frac{1}{1\vee u^2} \le \frac{\overline{\Phi}(u)}{\phi(u)/u}\le 1,
$$
where $\phi(u) = e^{-u^2/2}/\sqrt{2\pi}$ and $\overline\Phi(u) = \int_u^\infty\phi(x)dx.$
\end{lemma}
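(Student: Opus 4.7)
\textbf{Proof proposal for Lemma \ref{l:Mills_ratio}.}

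The plan is to use the identity $\phi'(x) = -x \phi(x)$ together with integration by parts. I will treat the upper and lower bounds separately.

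For the upper bound, observe that for $x \ge u > 0$ we have $1 \le x/u$, so
\begin{equation*}
\overline{\Phi}(u) = \int_u^\infty \phi(x)\,dx \le \int_u^\infty \frac{x}{u}\phi(x)\,dx = -\frac{1}{u}\int_u^\infty \phi'(x)\,dx = \frac{\phi(u)}{u},
\end{equation*}
which gives the right inequality.

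For the lower bound, the case $0 < u < 1$ is immediate since $1 \vee u^2 = 1$ makes the left-hand side equal to $0$, and the ratio is manifestly nonnegative. For $u \ge 1$, the strategy is to expand $\overline{\Phi}(u)$ using integration by parts twice. Writing $\phi(x) = -\phi'(x)/x$ and integrating by parts once,
\begin{equation*}
\overline{\Phi}(u) = \int_u^\infty \frac{-\phi'(x)}{x}\,dx = \frac{\phi(u)}{u} - \int_u^\infty \frac{\phi(x)}{x^2}\,dx.
\end{equation*}
Applying the same trick to the remaining integral,
\begin{equation*}
\int_u^\infty \frac{\phi(x)}{x^2}\,dx = \int_u^\infty \frac{-\phi'(x)}{x^3}\,dx = \frac{\phi(u)}{u^3} - 3\int_u^\infty \frac{\phi(x)}{x^4}\,dx \le \frac{\phi(u)}{u^3},
\end{equation*}
since the tail integral is nonnegative. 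Combining these two displays yields
\begin{equation*}
\overline{\Phi}(u) \ge \frac{\phi(u)}{u} - \frac{\phi(u)}{u^3} = \left(1 - \frac{1}{u^2}\right)\frac{\phi(u)}{u},
\end{equation*}
which, since $1 \vee u^2 = u^2$ for $u \ge 1$, is exactly the desired bound.

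Neither step looks like a real obstacle; the only mild care needed is to notice that a single integration by parts gives the weaker bound $\overline{\Phi}(u) \ge u\phi(u)/(u^2+1)$, and one must perform the second integration by parts (or verify $u^4 \ge u^4 - 1$) in order to recover the precise form $1 - 1/u^2$ stated in the lemma.
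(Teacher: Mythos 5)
Your proof is correct, and it takes a genuinely different route from the paper. You use the classical integration-by-parts argument for Mill's ratio: the identity $\phi'(x) = -x\phi(x)$ gives the upper bound directly, and two successive integrations by parts give $\overline{\Phi}(u) \ge \bigl(1 - u^{-2}\bigr)\phi(u)/u$ for $u \ge 1$, with the case $u<1$ dispatched by nonnegativity. The paper instead substitutes $z = x-u$ to write $\overline{\Phi}(u)\big/\bigl(\phi(u)/u\bigr) = \E\bigl[e^{-E^2/(2u^2)}\bigr]$ for a unit-mean exponential variable $E$, and then sandwiches $e^{-x}$ between $1-x$ and $1$, using $\E[E^2]=2$; this yields both bounds in one stroke and in fact gives $1 - u^{-2}$ for all $u>0$ (the $1\vee u^2$ in the statement merely truncates at zero). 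The two arguments are of comparable length; yours is the more elementary and textbook-standard, while the paper's probabilistic representation is arguably slicker and makes the two-sided nature of the estimate transparent. One small quibble: your closing remark labels the single-integration-by-parts bound $u\phi(u)/(u^2+1)$ as "weaker," but since $u^2/(u^2+1) \ge (u^2-1)/u^2$ (equivalently $u^4 \ge u^4-1$, as you note), that bound is actually the sharper of the two and implies the one in the lemma; this mislabeling is harmless since your main derivation proceeds via the second integration by parts and is correct as written.
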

\begin{proof}  We have
\begin{align*}
\frac{\overline{\Phi}(u)}{\phi(u)/u} &= \frac{u}{\phi(u)} \int_u^{\infty} \phi(x) dx = u \int_u^{\infty} e^{-\frac{x^2 - u^2}{2}} dx \\
&= u \int_0^{\infty} e^{-\frac{(z+u)^2 - u^2}{2}} dz = \int_0^\infty e^{-\frac{z^2}{2}} ue^{-uz} dz = \E [ e^{-E^2/(2u^2)}],
\end{align*}
where $E$ is an exponentially distributed random variable with unit mean, and we used the change 
of variables $z:= x-u$. Observing that $1 - x \le e^{-x} \le 1$, for all $x\ge 0$, we get 
$$
1- \frac{E^2}{2u^2} \le e^{-E^2/(2u^2)} \le 1.
$$
The result follows upon taking expectation and recalling that $\E[E^2] = 2$.
\end{proof}

\begin{proof}[Proposition \ref{prop: UpperBound}]
Note first that \eqref{e:prop:UpperBound} implies $\P(\xi_p>1+\delta_p)\to 0$.  Indeed, this follows from the Markov inequality:
$$
\P(\xi_p -1 > \delta_p) = \P( (\xi_p-1)_+ > \delta_p) \le \delta_p^{-1}\E (\xi_p-1)_+.
$$

Now, we focus on proving \eqref{e:prop:UpperBound}.
We can write \begin{align}\label{def: Jdp}
\frac{1}{\delta_p}\E(\xi_p-1)_+& 
=\frac{1}{\delta_p}\int_0^{\infty}{\mathbb P}(\xi_p-1>z)dz\nonumber\\ &  = \int_0^{\infty}{\mathbb P}(\xi_p>1+\delta_px)dx=:\text{J}(\delta_p),
\end{align}
where in the last integral we used the change of variables $z=\delta_p x$.

Recalling that $\xi_p = u_p^{-1}\max_{i\in [p]} \epsilon_p(i)$, 
by the union bound, for the last integrand we have that 
\begin{equation}\label{e:P-xi-bound}
{\mathbb P}(\xi_p>1+\delta_px)\leq  p\overline\Phi(u_p(1+\delta_px)) =\frac{\overline\Phi(u_p(1+\delta_px))}{\overline\Phi(u_p)}.
\end{equation}

By Lemma \ref{l:Mills_ratio}, we further obtain that 
\begin{align}\label{e:Bp-bound}
 \frac{\overline\Phi(u_p(1+\delta_px))}{\overline\Phi(u_p)}& \leq  \frac{1}{1-1/(1\vee u_p^2)}\cdot \frac{\phi(u_p(1+\delta_px))}{(1+\delta_px)\phi(u_p)}\nonumber \\
    & \leq  \frac{1}{1-1/(1\vee u_p^2)}
    \exp\Big\{ - \frac{u_p^2}{2}\Big( (1+\delta_px)^2 - 1\Big) \Big\}\nonumber\\
    & \leq B_p \exp\{ - u_p^2 \delta_p x\},
\end{align}
where $B_p:=(1-1/(1\vee u_p^2))^{-1}\to 1$, as $p\to\infty$, 
is a constant independent of $x\geq 0$ and in the last inequality 
we also used the simple bound $(1+\delta_px)^2 - 1\geq 2 \delta_px$.

Condition \eqref{assumption delta_p} means that there is a sequence $\gamma(p)$ diverging to infinity slower than $\log(p)$ such that 
$$
    \label{e:delta-gamma}
    \delta(p) = \frac{\gamma(p)}{\log(p)}.
$$
Thus, by Relation \eqref{e:Bp-bound} and the facts that $u_p^2\sim 2 \log(p)$ and $B_p\sim 1$, as $p\to\infty$, we obtain
\begin{align*}
    \frac{\overline\Phi(u_p(1+\delta_px))}{\overline\Phi(u_p)}& \leq 2 \cdot e^{-2\gamma(p)x},
\end{align*}
for all sufficiently large $p$. Since $\gamma(p)\to \infty$, Relation
\eqref{e:P-xi-bound} and the Dominated Convergence Theorem applied to
\eqref{def: Jdp}, implies
\begin{align*}
\lim_{p\to\infty} \text{J}(\delta_p) & \leq \lim_{p\to\infty}
\int_0^{\infty} 2e^{-2\gamma(p) x}dx = 0.
\end{align*}
This completes the proof of \eqref{e:Bp-bound}. \qed
\end{proof}

\textbf{Term ${\rm II}(\delta_p)$.} Handling term II of \eqref{def: I and II} is more involved and this is where the dependence structure of the array plays a role. We start by presenting a more careful reformulation of Lemma B.1 in \cite{gao2018fundamental}. 

\begin{lemma}\label{lem: B.1}
Let $(X_i)_{i=1}^p$ be $p$ iid random variables with distribution $F$ and density $f$, such that $$\E(X_i)_-\equiv\E(\max\{-X_i,0\})<\infty.$$
Denote the maximum of the $X_i$'s as $M_p:=\max_{i=1,\hdots,p}X_i.$ Suppose that $f$ is eventually decreasing, i.e., there exists a $C_0$ such that $f(x_1)\geq f(x_2)$ whenever $C_0\leq x_1\leq x_2,$ then 
$$\frac{\E M_p}{u_{p+1}}\geq (1-F^p(C_0))+\frac{\E [X_1|X_1<C_0]}{u_{p+1}}F^p(C_0),$$
where $u_{p+1}=F^{\leftarrow}(1-1/(p+1)).$
\end{lemma}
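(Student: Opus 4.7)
The strategy is to pass to uniform quantiles and split the expectation at the level $C_0$. Using the change of variable $u = F(x)$,
\begin{equation*}
\E M_p = \int_{-\infty}^{\infty} x\, p F^{p-1}(x) f(x)\, dx = \int_0^1 F^{\leftarrow}(u)\, p u^{p-1}\, du = \E\bigl[F^{\leftarrow}(V)\bigr],
\end{equation*}
where $V \sim \mathrm{Beta}(p,1)$ has density $p v^{p-1}\mathbf{1}_{[0,1]}(v)$. The crucial arithmetic identity is $\E V = p/(p+1)$, so that $u_{p+1} = F^{\leftarrow}(\E V)$. I would then split $\E\bigl[F^{\leftarrow}(V)\bigr]$ at $V = F(C_0)$---equivalently, at $M_p = C_0$---and handle the two halves separately.

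For the lower half $\{V < F(C_0)\}$, I condition on $\{M_p < C_0\} = \bigcap_i\{X_i < C_0\}$; on this event the $X_i$'s remain i.i.d.\ with density $f(\cdot)/F(C_0)$ on $(-\infty, C_0)$, and the conditional mean of any single $X_i$ is finite thanks to the hypothesis $\E(X_1)_- < \infty$. Since $M_p \geq X_1$ pointwise, the usual conditional-expectation comparison yields
\begin{equation*}
\E\bigl[F^{\leftarrow}(V)\mathbf{1}_{V<F(C_0)}\bigr] = F^p(C_0)\,\E[M_p \mid M_p<C_0] \geq F^p(C_0)\,\E[X_1 \mid X_1<C_0],
\end{equation*}
which is precisely the second summand in the target bound.

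For the upper half $\{V \geq F(C_0)\}$ the monotonicity hypothesis on $f$ enters. Since $f$ is non-increasing on $[C_0,\infty)$, the cdf $F$ is concave there, which (via the identity $(F^{\leftarrow})''(u) = -f'(F^{\leftarrow}(u))/f^{3}(F^{\leftarrow}(u))$) is equivalent to $F^{\leftarrow}$ being \emph{convex} on $[F(C_0),1]$. Applying Jensen's inequality to the conditional law of $V$ given $\{V\geq F(C_0)\}$, which is supported exactly on $[F(C_0),1]$, gives
\begin{equation*}
\E\bigl[F^{\leftarrow}(V)\mid V\geq F(C_0)\bigr] \geq F^{\leftarrow}\!\bigl(\E[V\mid V\geq F(C_0)]\bigr).
\end{equation*}
A direct Beta computation yields $\E[V\mid V\geq F(C_0)] = \frac{p}{p+1}\cdot\frac{1-F^{p+1}(C_0)}{1-F^{p}(C_0)} \geq \frac{p}{p+1}$, the last inequality coming from the telescoping identity $1-a^{p+1} = (1-a)(1+a+\cdots+a^p)$ which shows the ratio is a quotient of partial geometric sums exceeding $1$. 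Monotonicity of $F^{\leftarrow}$ then upgrades the Jensen bound to $F^{\leftarrow}(p/(p+1)) = u_{p+1}$; multiplying through by $\P(V\geq F(C_0)) = 1 - F^p(C_0)$ delivers the $u_{p+1}(1 - F^p(C_0))$ term. Summing the two halves and dividing by $u_{p+1}$ yields the lemma.

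I expect the main obstacle to be the upper-half step: one must recognize that the hypothesis ``$f$ eventually decreasing'' is exactly what makes Jensen's inequality point in the useful direction for $F^{\leftarrow}$ on $[F(C_0),1]$, and verify that the conditional Beta mean lies \emph{above} $\E V = p/(p+1)$. It is this second observation that produces the factor $u_{p+1} = F^{\leftarrow}(\E V)$ in a single stroke, without any need for a case split on whether $C_0 \lessgtr u_{p+1}$; the lower-half bound is by comparison a soft consequence of the preservation of the i.i.d.\ structure under conditioning.
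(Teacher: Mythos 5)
Your argument is correct, and it is worth noting that the paper itself gives no proof of this lemma at all --- it simply defers to Lemma B.1 of \cite{gao2018fundamental} --- so you are supplying an argument the paper omits. Your route is clean and self-contained: representing $M_p$ as $F^{\leftarrow}(V)$ with $V\sim\mathrm{Beta}(p,1)$ turns $u_{p+1}=F^{\leftarrow}(p/(p+1))=F^{\leftarrow}(\E V)$ into the natural comparison point; the lower half $\{V<F(C_0)\}$ is handled exactly as one would expect (independence preserves the conditional i.i.d.\ structure, and $M_p\ge X_1$ pointwise, with $\E(X_1)_-<\infty$ guaranteeing the conditional mean is finite); and the upper half is where the hypothesis on $f$ does its work --- $f$ non-increasing on $[C_0,\infty)$ makes $F$ concave there, hence $F^{\leftarrow}$ convex on $[F(C_0),1)$, so Jensen points the right way, and the Beta computation $\E[V\mid V\ge F(C_0)]=\frac{p}{p+1}\cdot\frac{1-F^{p+1}(C_0)}{1-F^p(C_0)}\ge\frac{p}{p+1}$ together with monotonicity of $F^{\leftarrow}$ delivers $u_{p+1}$ without any case split on the relative position of $C_0$ and $u_{p+1}$. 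All the individual steps check out (including the geometric-sum inequality and the convexity of the inverse of an increasing concave function, which holds without the differentiability implicitly used in your displayed second-derivative identity). The only caveat, which is really a defect of the statement rather than of your proof, is that the final division by $u_{p+1}$ preserves the inequality only when $u_{p+1}>0$; in the paper's sole application ($F=\Phi$, $C_0=0$, $p\ge 2$) this is satisfied, so nothing is lost.
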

\begin{proof}
For the proof, refer to the proof of Lemma B.1 in \cite{gao2018fundamental}.\qed
\end{proof}

Recall that a Gaussian triangular array $\mathcal{E}=\left\{\epsilon_p(j)\right\}_{j=1}^p$ with standard Normal marginals is said to be UDD if for every $\tau>0$, \begin{equation}\label{eq: N_epsilon}
N_{\mathcal{E}}(\tau):=\sup_{p\in\mathbb{N}}\max_{i=1,\hdots,p}\left|\left\{\kappa\in[p]: \ \rm{Cov}(\epsilon_p(i),\epsilon_p(\kappa))>\tau\right\}\right|<\infty.
\end{equation}
That is, for every $p$ and $i\in[p]$, there are at most $N_{\mathcal{E}}(\tau)$ indices $\kappa$, such that the covariance between $\epsilon_p(i)$ and $\epsilon_p(\kappa)$ exceeds $\tau$.

The function $N_{\mathcal{E}}(\tau)$ encodes certain aspects of the dependence structure of the array $\mathcal{E}.$ It will play a key role in the derivation of the upper bound on the rate of concentration of maxima. The next result is an extension of Proposition A.1 in \cite{gao2018fundamental} tailored to our needs. For the benefit of the reader, we reproduce the key argument involving a packing construction and the Sudakov-Fernique bounds, which may be of independent interest.
\begin{proposition}\label{prop: expectationlowerbound}
For every UDD Gaussian array $\mathcal{E}$, and any subset $S_p\subseteq\left\{1,\hdots,p\right\}$ with $ q=|S_p|,$ and $\tau\in(0,1)$, we have that \begin{align}
    \mathbb{E}\left[\frac{\underset{j\in S_p}{\max \epsilon_p(j)}}{u_q}\right]& \geq \frac{u_{q/N_{\mathcal{E}}(\tau)+1}}{u_q}\sqrt{1-\tau}\left(1-\frac{1}{2^{q/N_{\epsilon}(\tau)}}-\frac{\sqrt{2/\pi}}{u_{q/N_{\epsilon}(\tau)+1}}\cdot\frac{1}{2^{q/N_{\epsilon}(\tau)}}\right)\label{ineq: basicinequality}\\
    &:=1-R_q\label{def: Rp},
\end{align}
where $N_{\mathcal{E}}(\tau)$ is given in \eqref{eq: N_epsilon}.
\end{proposition}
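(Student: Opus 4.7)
The plan is to combine a greedy packing argument with Sudakov--Fernique comparison to reduce the problem to the iid case, and then invoke Lemma \ref{lem: B.1}.

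First I would construct a ``$\tau$-separated'' subset $T\subseteq S_p$ by a greedy procedure: initialize $U:=S_p$ and $T:=\emptyset$; while $U\neq\emptyset$, pick an arbitrary $i\in U$, set $T\leftarrow T\cup\{i\}$, and remove from $U$ the set $\{k\in U:\operatorname{Cov}(\epsilon_p(i),\epsilon_p(k))>\tau\}$. By the UDD definition \eqref{eq: N_epsilon}, at most $N_{\mathcal{E}}(\tau)$ indices (including $i$ itself) are removed at each step, so the process terminates with $|T|\geq \lceil q/N_{\mathcal{E}}(\tau)\rceil$. For definiteness write $m:=\lfloor q/N_{\mathcal{E}}(\tau)\rfloor$ (with a little care one can keep the cleaner value $q/N_{\mathcal{E}}(\tau)$ that appears in the statement); by construction any two distinct $i,j\in T$ satisfy $\operatorname{Cov}(\epsilon_p(i),\epsilon_p(j))\leq\tau$.

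Next I would set up a Sudakov--Fernique comparison between $\{\epsilon_p(j)\}_{j\in T}$ and the auxiliary iid process $Y_j:=\sqrt{1-\tau}\,Z_j$, $j\in T$, with $Z_j$ iid standard Normal. For $i\neq j$ in $T$,
\[
\mathbb{E}(\epsilon_p(i)-\epsilon_p(j))^2 \;=\; 2-2\operatorname{Cov}(\epsilon_p(i),\epsilon_p(j)) \;\geq\; 2(1-\tau) \;=\; \mathbb{E}(Y_i-Y_j)^2,
\]
so Sudakov--Fernique (in the form: larger increment variances imply larger expected maximum) yields
\[
\mathbb{E}\max_{j\in S_p}\epsilon_p(j) \;\geq\; \mathbb{E}\max_{j\in T}\epsilon_p(j) \;\geq\; \sqrt{1-\tau}\,\mathbb{E}\max_{j\in T}Z_j.
\]

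Finally I would estimate the iid Gaussian maximum $\mathbb{E}\max_{j\in T}Z_j$ via Lemma \ref{lem: B.1} applied to $F=\Phi$, which has an eventually decreasing density. The natural choice is $C_0=0$, since $\Phi(0)=1/2$ gives $\Phi^{|T|}(0)=2^{-|T|}$, and the symmetry of the standard Normal yields $\mathbb{E}[Z_1\mid Z_1<0]=-\sqrt{2/\pi}$. Applied with $|T|=q/N_{\mathcal{E}}(\tau)$, the lemma gives
\[
\mathbb{E}\max_{j\in T}Z_j \;\geq\; u_{q/N_{\mathcal{E}}(\tau)+1}\left(1-\frac{1}{2^{q/N_{\mathcal{E}}(\tau)}}-\frac{\sqrt{2/\pi}}{u_{q/N_{\mathcal{E}}(\tau)+1}}\cdot\frac{1}{2^{q/N_{\mathcal{E}}(\tau)}}\right),
\]
and dividing through by $u_q$ and multiplying by $\sqrt{1-\tau}$ yields \eqref{ineq: basicinequality}.

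The main subtleties I would expect are bookkeeping: verifying that the greedy selection indeed retains the clean value $q/N_{\mathcal{E}}(\tau)$ rather than a floor (either by enlarging $T$ with arbitrary indices or by absorbing a rounding error into an inequality that is monotone in $|T|$, since $u_{n+1}$ is increasing and $2^{-n}$ is decreasing), and confirming that the Sudakov--Fernique hypothesis goes in the correct direction --- the covariance of $\epsilon_p(i)$ with itself is $1$ whereas $Y_i$ has variance $1-\tau$, so one must work with increment variances, which is why the $\sqrt{1-\tau}$ scaling of the comparison process is forced. Once these points are verified the inequality is essentially an assembly of the three ingredients above.
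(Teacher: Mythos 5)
Your proposal is correct and follows essentially the same route as the paper's proof: a greedy construction of a $\tau$-separated subset of size at least $q/N_{\mathcal{E}}(\tau)$ using the UDD bound, a Sudakov--Fernique comparison with the iid process $\sqrt{1-\tau}\,Z_j$, and an application of Lemma \ref{lem: B.1} with $C_0=0$. The bookkeeping points you flag (monotonicity in $|T|$ and the direction of the increment comparison) are handled in the paper exactly as you anticipate.
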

\begin{remark}
Note that without loss of generality we can assume $S_p=\{1,\hdots,p\}.$ We prove a slightly more general result, but the only application in this paper will be for $q=p.$
\end{remark}
\begin{proof}
Define the canonical (pseudo) metric on $S_p$, 
$$d(i,j)=\sqrt{\E (\epsilon(i)-\epsilon(j))^2}.$$ 
This metric takes values between 0 and 2, since $\epsilon_p(i),\ i=1,\hdots,p,$ have zero means and unit variances. Fix $\tau\in(0,1)$, take $\gamma=\sqrt{2(1-\tau)}$ and let $\Gamma$ be a $\gamma$-packing of $S_p$. That is, let $\Gamma$ be a subset of $S_p$, such that for any $i,j\in \Gamma,\ i\neq j,$ we have $d(i,j)>\gamma,$ i.e.,
$$d(i,j)=\sqrt{2\left(1-\Sigma_p(i,j)\right)}\geq\gamma=\sqrt{2(1-\tau)},$$
or equivalently, $\Sigma_p(i,j)\leq \tau.$ We claim that we can find a $\gamma$-packing $\Gamma$ whose number of elements is at least \begin{equation}\label{def: Gamma lower bound}
|\Gamma|\geq \frac{q}{N_{\mathcal{E}}(\tau)}.    
\end{equation}
Indeed, $\Gamma$ can be constructed iteratively as follows:
\begin{enumerate}[label= \textbf{Step \arabic*:},leftmargin=*]
    \item Set $S_p^{(0)}:=S_p$ and $\Gamma:=\{j_1\},$ where $j_1\in S_p^{(0)}$ is an arbitrary element. Set $k:=1$.
    \item Set $S_p^{(k)}:=S_p^{(k-1)}\setminus B_{\gamma}(j_k)$, where $$B_{\gamma}(j_k):=\{i\in S_p: d(i,j_k)<\gamma\}.$$
    \item If $S_p^{(k)}\neq\emptyset$, pick an arbitrary $j_{k+1}\in S_p^{(k)},$ set $\Gamma:=\Gamma\cup\{j_{k+1}\},$ and $k:=k+1,$ go to Step 2; otherwise stop.
\end{enumerate}
By the definition of UDD, there are at most $N_{\mathcal{E}}(\tau)$ coordinates whose covariance with $\epsilon_p(j)$ exceed $\tau.$ Therefore, at each iteration, $|B_{\gamma}(j_k)|\leq N_{\mathcal{E}}(\tau),$ and hence $$|S_p^{(k)}|\geq |S_p^{(k-1)}|-|B_{\gamma}(j_k)|\geq q-kN_{\mathcal{E}}(\tau).$$
The construction can continue for at least $q/N_{\mathcal{E}}(\tau)$ iterations, which implies \eqref{def: Gamma lower bound}.

Now, we define on this $\gamma$-packing $\Gamma$ an independent Gaussian process $\{\eta(j)\}_{j\in\Gamma}$, $$\eta(j)=\frac{\gamma}{\sqrt{2}}Z(j), \ j\in\Gamma,$$
where the $Z(j)$'s are iid standard Normal random variables. The increments of the new process are smaller than that of the original in the following sense, 
$$\E(\eta(i)-\eta(j))^2=\gamma^2\leq d^2(i,j)=\E(\epsilon_p(i)-\epsilon_p(j))^2,$$
for all $i\neq j,\ i,j\in\Gamma.$ Applying the Sudakov-Fernique inequality (see, e.g., Theorem 2.2.3 in \cite{adler2009random}) to $\{\eta(j)\}_{j\in\Gamma}$ and $\{\epsilon_p(j)\}_{j\in\Gamma},$ we have 
$$\mathbb{E}\left[\underset{j\in \Gamma}{\text{max}}(\eta(j))\right]\leq \mathbb{E}\left[\underset{j\in\Gamma}{\text{max}}(\epsilon_p(j))\right]\leq \mathbb{E}\left[\underset{j\in S_p }{\text{max}}(\epsilon_p(j))\right].$$

This implies 
\begin{align*}
\mathbb{E}\left[\frac{1}{u_q}\max_{j\in S_p} \epsilon_p(j)\right]\geq \mathbb{E}\left[\frac{1}{u_{|\Gamma|+1}}\max_{j\in \Gamma} \eta(j)\right] \cdot\frac{u_{|\Gamma|+1}}{u_q}\geq \frac{u_{|\Gamma|+1}}{u_q}\cdot\sqrt{1-\tau}\cdot \mathbb{E}\left[\frac{1}{u_{|\Gamma|+1}}\max_{j\in \Gamma} Z(j)\right].
\end{align*}
Now, the application of Lemma \ref{lem: B.1} to the standard Normal distribution for $C_0=0$ entails that,
\begin{align*}
    \frac{\E \left[\max_{j\in \Gamma} Z(j)\right]}{u_{|\Gamma|+1}}\geq 1-\frac{1}{2^{|\Gamma|}}-\frac{\sqrt{2/\pi}}{u_{|\Gamma|+1}}\cdot\frac{1}{2^{|\Gamma|}}.
\end{align*}
Since $|\Gamma|\geq q/N_{\mathcal{E}}(\tau)$ the desired lower bound in \eqref{ineq: basicinequality} is obtained.\qed
\end{proof}

We are now interested in the rate at which the lower bound in \eqref{ineq: basicinequality} converges to 1. Equivalently, we desire to find the rate of decay of $R_q.$ This rate is obtained in the following Lemma.

\begin{lemma}\label{lem: rate for II}
Let $R_q,\,\alpha(q)$ be defined as in \eqref{def: Rp} and \eqref{def: alpha(p)} respectively. Then \begin{equation}\label{def: upper bound of rate of R_p}
R_q\asymp \alpha(q)+\tau(q)+2^{-q^{1-\alpha(q)}},\quad\mbox{as } q\to\infty. 
\end{equation}
\end{lemma}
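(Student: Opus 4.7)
The plan is to exploit the identity $N_{\mathcal{E}}(\tau(q)) = q^{\alpha(q)}$, which is immediate from the definition \eqref{def: alpha(p)} of $\alpha(q)$, in order to convert $q/N_{\mathcal{E}}(\tau(q))$ into $q^{1-\alpha(q)}$. Setting $m_q := q^{1-\alpha(q)}$, the defining relation \eqref{def: Rp} of $R_q$ reads
\begin{equation*}
1 - R_q \;=\; V_q \cdot W_q \cdot (1 - A_q),
\end{equation*}
where $V_q := u_{m_q+1}/u_q$, $W_q := \sqrt{1-\tau(q)}$, and $A_q := 2^{-m_q}\bigl(1 + \sqrt{2/\pi}/u_{m_q+1}\bigr)$. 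The rest of the argument consists of establishing leading-order asymptotics for each factor and then recombining them.

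For the factor-by-factor asymptotics, observe first that since $\alpha(q) \to 0$ we have $m_q \to \infty$, so $u_{m_q+1} \to \infty$ by Lemma \ref{l:ap-bp-constants}(i); consequently the parenthesis in $A_q$ tends to $1$ and $A_q \asymp 2^{-m_q} = 2^{-q^{1-\alpha(q)}}$. The same lemma yields $u_p \sim \sqrt{2\log p}$, from which
\begin{equation*}
V_q^2 \;=\; \frac{u_{m_q+1}^2}{u_q^2} \;\sim\; \frac{\log m_q}{\log q} \;=\; 1 - \alpha(q),
\end{equation*}
so $V_q \sim \sqrt{1-\alpha(q)}$. Since $\alpha(q) \to 0$, Taylor-expanding $\sqrt{1-x}$ about $x=0$ gives $1 - V_q \asymp \alpha(q)$, and the parallel expansion gives $1 - W_q \asymp \tau(q)$.

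The final step is to combine the factor expansions via the identity
\begin{equation*}
R_q \;=\; (1 - V_q W_q) \;+\; V_q W_q\, A_q.
\end{equation*}
Because $V_q W_q \to 1$, the second summand satisfies $V_q W_q\, A_q \asymp A_q \asymp 2^{-q^{1-\alpha(q)}}$. For the first summand, the algebraic decomposition $1 - V_q W_q = (1 - V_q) + V_q(1 - W_q)$ together with the bounds of the previous paragraph gives $1 - V_q W_q \asymp \alpha(q) + \tau(q)$. Summing the two contributions delivers the claimed $R_q \asymp \alpha(q) + \tau(q) + 2^{-q^{1-\alpha(q)}}$.

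The principal obstacle lies in turning $V_q \sim \sqrt{1-\alpha(q)}$ into the sharp two-sided bound $1 - V_q \asymp \alpha(q)$. A plain invocation of $u_p \sim \sqrt{2\log p}$ introduces a correction of order $(\log\log q)/\log q$ coming from the secondary term in the asymptotics of $u_p$, which must be absorbed. I would handle this by appealing to the sharper expansion $u_p^2 = 2\log p - \log\log p - \log(4\pi) + o(1)$ contained in the proof of Lemma \ref{l:ap-bp-constants} (cf.\ \eqref{e:log-quantile-relation}), computing $V_q^2 = 1 - \alpha(q) + o(\alpha(q))$ directly from this expansion, and thereby extracting the matching lower bound on $1 - V_q$.
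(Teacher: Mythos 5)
Your proposal is correct and follows essentially the same route as the paper's proof: the paper passes to $\log(1-R_q)$ and expands the three resulting logarithms, which is equivalent to your additive decomposition $R_q=(1-V_qW_q)+V_qW_q A_q$, and both arguments hinge on the same identity $q/N_{\mathcal{E}}(\tau(q))=q^{1-\alpha(q)}$ together with the resulting asymptotic $u_{q/N_{\mathcal{E}}(\tau)+1}/u_q\sim\sqrt{1-\alpha(q)}$. The subtlety you flag at the end --- that $V_q\sim\sqrt{1-\alpha(q)}$ alone does not yield the two-sided bound $1-V_q\asymp\alpha(q)$ and a sharper expansion of $u_p$ is needed --- is genuine, and the paper's own proof in fact glosses over exactly this point.
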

\begin{proof}
Note that by definition $R_q\to0$, as $q\to\infty$. This implies that $R_q\sim\log(1-R_q)$, as $q\to\infty$, so we just need the rate of \begin{align*}
& \log (1-R_q)  = \log \left(\frac{u_{q/N_{\mathcal{E}}(\tau)+1}}{u_q}\cdot\sqrt{1-\tau(q)}\cdot\left(1-\frac{1}{2^{q/N_{\epsilon}(\tau)}}-\frac{\sqrt{2/\pi}}{u_{q/N_{\epsilon}(\tau)+1}}\cdot\frac{1}{2^{q/N_{\epsilon}(\tau)}}\right)\right)\\ & \qquad = \log\left(\frac{u_{q/N_{\mathcal{E}}(\tau)+1}}{u_q}\right)+\frac{1}{2}\log(1-\tau(q))+\log\left(1-\frac{1}{2^{q/N_{\epsilon}(\tau)}}-\frac{\sqrt{2/\pi}}{u_{q/N_{\epsilon}(\tau)+1}}\cdot\frac{1}{2^{q/N_{\epsilon}(\tau)}}\right).
\end{align*}
Now, the facts that $\alpha(q)=\log(N_{\cal E}(\tau))/\log(q)$ and
$u_q\sim\sqrt{2\log(q)}$ imply that
\begin{align*}
    \frac{u_{q/N_{\mathcal{E}}(\tau)+1}}{u_q}& \sim \sqrt{\frac{2\log(1+q^{1-\alpha(q)})}{2\log(q)}} \sim \sqrt{\frac{\log(q^{1-\alpha(q)})}{\log(q)}}=\sqrt{1-\alpha(q)},
\end{align*}
where we used the relation 
\begin{equation}\label{e:q and alpha}
    q^{1-\alpha(q)} = e^{\log(q) - \log(N_{\cal E}(\tau))} = \frac{q}{N_{\cal E}(\tau)}.
\end{equation}
However, since $\alpha(q) = \log(N_{\mathcal{E}}(\tau(q))/\log(q)\to0$ and $\tau(q)\to0$, we have
    \begin{align*}
    \log(1-\alpha(q)) & =-\alpha(q)+o(\alpha(q)),\\
     \log(1-\tau(q)) & =-\tau(q)+o(\tau(q)),
     \end{align*}
and by \eqref{e:q and alpha}
\begin{align*}
    \log\left(1-\frac{1}{2^{q/N_{\epsilon}(\tau)}}-\frac{\sqrt{2/\pi}}{u_{q/N_{\epsilon}(\tau)+1}}\cdot\frac{1}{2^{q/N_{\epsilon}(\tau)}}\right) & = \log\left(1-2^{-q^{1-\alpha(q)}}-\frac{\sqrt{2/\pi}}{u_{q/N_{\epsilon}(\tau)+1}}\cdot 2^{-q^{1-\alpha(q)}}\right)\\ & = 2^{-q^{1-\alpha(q)}} + o\left(2^{-q^{1-\alpha(q)}}\right).
\end{align*}
As a result, we have 
\begin{equation}\label{lem: rate for II: intermediate rate}
R_q\asymp \alpha(q)+\tau(q)+2^{-q^{1-\alpha(q)}}+o\left(\max\left\{\alpha(q),\tau(q),2^{-q^{1-\alpha(q)}}\right\}\right),
\end{equation}
which completes the proof.
\end{proof}

\textbf{Proof of Theorem \ref{thm: capstone}.} 
We are now in position to complete the proof of Theorem \ref{thm: capstone}, which consists of a combination of the results that have already been established in Section \ref{sec:proofs}. 

\begin{proof}
Recall the definition of $\xi_p$ in \eqref{def: xip} and  that \begin{align*}
{\mathbb P}(|\xi_p-1|>\delta_p) & = {\rm I}(\delta_p) + {\rm II}(\delta_p),
\end{align*} 
where ${\rm I}(\delta_p)$ and ${\rm II}(\delta_p)$ are defined as in \eqref{def: I and II}. We shall show that both terms vanish.

Proposition \ref{prop: UpperBound}, along with \eqref{ineq: lb on the rate}, imply that I$(\delta_p)={\mathbb P}(\xi_p>1+\delta_p)\to0,$ as $p\to\infty.$
Observe that the term I$(\delta_p)={\mathbb P}(\xi_p>1+\delta_p)$ vanishes, regardless of the dependence structure of the array $\mathcal{E}$. The dependence plays a key role   in the rate of the term II$(\delta_p).$ 

We now steer our focus towards term II$(\delta_p)$. The Markov inequality yields \begin{equation*}
\text{II}(\delta_p)={\mathbb P}(\xi_p<1-\delta_p)\leq \frac{\E(\xi_p-1)_-}{\delta_p}.    
\end{equation*}
Since $\E(\xi_p-1)_-\leq \E(\xi_p-1)_+ +\bigl\lvert\E(\xi_p-1)\bigr\rvert,$ we have 
\begin{align}\label{ineq: II neg part}
\text{II}(\delta_p) & \leq \frac{1}{\delta_p}\left(\E(\xi_p-1)_++\bigl\lvert\E(\xi_p-1)\bigr\rvert\right)  \nonumber  \\
   & = \frac{1}{\delta_p}\left(\E(\xi_p-1)_+ +[\E(\xi_p-1)]_+ +[\E(\xi_p-1)]_-\right)\nonumber\\
   & \leq \frac{1}{\delta_p}\left(2\E(\xi_p-1)_++[\E(\xi_p-1)]_-\right),
\end{align}
where the last inequality follows from the fact that $[\E(\xi_p-1)]_+\leq \E(\xi_p-1)_+$.

Proposition \ref{prop: UpperBound} and \eqref{ineq: lb on the rate} imply that the term $\delta_p^{-1}\E(\xi_p-1)_+$ in \eqref{ineq: II neg part} vanishes. Moreover, Proposition \ref{prop: expectationlowerbound} entails $$[\E(\xi_p-1)]_-=\max\{0,-\E(\xi_p-1)\}\leq |R_p|.$$ Thus, the term ${\rm II}(\delta_p)$ vanishes, provided that   $R_p/\delta_p\to 0$. This follows, however, from Lemma \ref{lem: rate for II} and \eqref{ineq: lb on the rate}, since for $\alpha(p)\to 0$, we have 
$$
 \frac{1}{\log(p)}\gg 2^{-p^{1-\alpha(p)}},\quad{\rm as}\ p\to \infty
$$ and the proof is complete.\qed
\end{proof}

\begin{remark}\label{re: Tanguy} 
After we completed and submitted this paper, we became aware of the 
important work of \cite{tanguy2015some}. According to their paper, in the
stationary case, the upper bound of Theorem \ref{thm: capstone} above
partially follows from their Theorem 3. However, our work is in the
general setting of triangular arrays and does not require 
stationarity. 
The result in Theorem 5 of \cite{tanguy2015some}, could in 
principle, be used to derive bounds on rates of concentration of 
maxima for non-stationary arrays. This, however, requires verifying
two technical conditions. Our approach, based on the UDD condition
yields rates that can be explicitly related to the covariance structure 
of the array.  The in-depth comparison of the two approaches merits an
independent study beyond the scope of the present work.
\end{remark}

\begin{acknowledgements}
 We thank two anonymous referees for their very careful 
 reading of our paper. Their insightful comments led us to
 improve the presentation and the upper bound on the rate 
 of concentration in Theorem \ref{thm: capstone}. 
 
 The authors were partially supported by the NSF ATD grant DMS-1830293. The first author was also supported by the Onassis Foundation - Scholarship ID: F ZN 028-1 /2017-2018.
\end{acknowledgements}

%
%

\bibliographystyle{spbasic}      
\bibliography{references}   

\end{document}